\newtheorem{thm}{Theorem}[section]
\newtheorem{cor}[thm]{Corollary}
\newtheorem{lem}[thm]{Lemma}
\newtheorem{prop}[thm]{Proposition}
\newtheorem{conj}[thm]{Conjecture}
\newtheorem{pro}[thm]{Problem}
\newcommand{\R}{\mathbb R}
\newcommand{\Q}{\mathbb Q}
\newcommand{\F}{\mathbb F}
\newcommand{\hs}{\hspace{1cm}}
\begin{document}

\title{\Large {{\bf A New Family of Algebraically Defined Graphs With Small Automorphism Group}}}
\bigskip

\bigskip

\author{{\bf Felix Lazebnik  and  Vladislav Taranchuk}\\\\
\ Department of Mathematical Sciences\\
  University of Delaware, Newark, DE 19716, USA \\
 {\tt fellaz@udel.edu}, {\tt
vladtar@udel.edu}}

\maketitle

\vspace{-0.2in}

\begin{abstract}  {\small Let $p$ be an odd  prime, $q=p^e$, $e\ge 1$, and $\F = \F_q$ denote the finite field of $q$ elements.  Let $f: \F^2\to \F$ and  $g: \F^3\to \F$  be functions, and  let $P$ and $L$ be two copies of the 3-dimensional vector space $\F^3$.
%We will refer to vertices of $P$ as {\it points}, and to those of $L$ as {\it lines}.
Consider a bipartite graph $\Gamma _\F (f, g)$ with vertex partitions $P$ and $L$ and with edges defined as follows: for every $(p)=(p_1,p_2,p_3)\in P$ and every $[l]= [l_1,l_2,l_3]\in L$, $\{(p), [l]\} = (p)[l]$ is an edge in $\Gamma _\F (f, g)$ if
$$p_2+l_2 =f(p_1,l_1) \;\;\;\text{and}\;\;\;
     p_3 + l_3 = g(p_1,p_2,l_1).$$
The following question  appeared in Nassau \cite{nassau2020}:
Given $\Gamma _\F (f, g)$,  is it always possible to find a function $h:\F^2\to \F$ such that the graph $\Gamma _\F (f, h)$  with the same vertex set as $\Gamma _\F (f, g)$ and with edges $(p)[l]$  defined in a similar way  by the system $$p_2+l_2 =f(p_1,l_1) \;\;\;\text{and}\;\;\;
     p_3 + l_3 = h(p_1,l_1),$$
is isomorphic to $\Gamma _\F (f, g)$ for infinitely many $q$?  In this paper we show that the  answer to the question is negative and the graphs $\Gamma_{\F_p}(p_1\ell_1, p_1\ell_1p_2(p_1 + p_2 + p_1p_2))$ provide such an example for $p \equiv 1 \pmod{3}$. Our argument is based on proving that the automorphism group of these graphs has order $p$, which is the smallest possible order of the automorphism group of graphs of the form $\Gamma_{\F}(f, g)$.
}
\end{abstract}

\bigskip

\section{Introduction} \label{sec1}

For a set $A$, let $|A|$ denote the cardinality of $A$. Let $p$ be an odd  prime, $q=p^e$, $e\ge 1$, $\F = \F_q$ denote the finite field of $q$ elements, $\F^\times = \F\setminus \{0\}$,  and $\F[X]$, $\F[X,Y]$, $\F[X, Y,Z]$ denote the rings of polynomials of indeterminates $X, Y,Z$  over $\F$.  A polynomial  $f\in \F[X]$, defines a polynomial function $\F\to \F$ via $a\mapsto f(a)$. We will use the same notation $f$ for this function, and write $f=f(x)$.   It is well known that over $\F$, every function can be represented uniquely by a polynomial function of degree at most $q-1$, e.g., one can use the Lagrange interpolating polynomial for this.    All undefined terms and facts that we use without proofs related to finite fields can be found in Lidl and Niederreiter \cite{LN97} or in Mullen and Panario \cite{handbook}.

For all missing definitions related to graphs, we refer the reader to Bollob\'{a}s \cite{bol98}.
Our primary object of study in this paper is defined as follows. Let $f\in \F[X,Y]$ and $g\in \F[X,Y,Z]$. Let $P$ and $L$ be two copies of the 3-dimensional vector space $\F^3$. We will refer to vertices of $P$ as {\it points}, and to those of $L$ as {\it lines}. To distinguish between vectors from $P$ and $L$, we will use parentheses and brackets, respectively.  Consider a bipartite graph $\Gamma _\F (f, g)$ with vertex partitions $P$ and $L$ and with edges defined as follows: for every $(p)=(p_1,p_2,p_3)\in P$ and every $[l]= [l_1,l_2,l_3]\in L$, $\{(p), [l]\} = (p)[l]$ is an edge in $\Gamma_3 = \Gamma _3 (f, g)= \Gamma _\F (f(p_1,l_1), g(p_1,p_2,l_1))$ if
$$
p_2+l_2 =f(p_1,l_1) \;\;\;\text{and}\;\;\;
     p_3 + l_3 = g(p_1,p_2,l_1).
$$
This graph has $2q^3$ vertices, and it is easy to see that a neighbor of any vertex is completely determined by the neighbor's first component. Hence, the degree of each vertex is $q$.

Graphs $\Gamma_\F (f,g)$ were studied recently due to a close relation of the graph $\Gamma_\F (p_1l_1, p_1l_1^2)$ to remarkable graphs called generalized quadrangles, whose definition we omit. For the details of this connection, generalizations of these graphs to other dimensions, and applications in finite geometries, extremal graph theory, and cryptography, see surveys by Lazebnik and Woldar \cite{lazwol2001}, a more recent one by Lazebnik, Sun and Wang \cite{lazsunwang2017}, and many references therein.  For recent results not discussed in \cite{lazsunwang2017},  see Nassau \cite{nassau2020};  Kodess, Kronenthal, Manzano-Ruiz and Noe \cite{kodkro2021};    Xu,  Cheng, and  Tang  \cite{xu2021}.

\medskip

The main question considered in this paper appeared in  Nassau \cite{nassau2020}.

\noindent{\bf Question 1 (\cite{nassau2020})}  {\it Given a graph $\Gamma_3= \Gamma _\F (f(p_1,l_1), g(p_1,p_2,l_1))$, is there a polynomial $h\in \F[X,Y]$, such that the graph  $\Gamma_2= \Gamma _\F (f(p_1,l_1), h(p_1,l_1))$:
$$p_2+l_2 =f(p_1,l_1) \;\;\;\text{and}\;\;\;
     p_3 + l_3 = h(p_1,l_1),$$
   is isomorphic to graph $\Gamma_3$ for infinitely many $q$?}

  Here the indices 2 and 3 in the notation for introduced  graphs reflect the number of variables present in functions $g$ and $h$.  There are many examples where the answer to Question 1 is affirmative. Moreover, given $g$, the polynomial $h$ can be found in many ways. For example, it is easy to verify that the following graphs are isomorphic:
    $$\Gamma _\F (p_1l_1, p_1^2l_1 - p_1p_2) \simeq \Gamma _\F (p_1l_1, p_2l_1)\simeq \Gamma _\F (p_1l_1, p_1l_1^2)\simeq \Gamma _\F (p_1l_1, p_1^2l_1).$$
    On the other hand,  it is not clear how to exhibit an infinite family of graphs
    $\Gamma_3$  for which the answer to Question 1 is {\it negative}.

  Another motivation for establishing that the family of all  graphs $\Gamma_2$  is a proper subset of the family of all  graphs $\Gamma_3$ (up to isomorphism) is related to the study of graphs with many edges and high girth.   The {\it girth}  of a graph containing cycles is the minimum length of all of its cycles. For odd $q\ge 3$, the only known graphs of the form  $\Gamma_3$ of girth $8$ are isomorphic to $\Gamma _\F (p_1l_1, p_1l_1^2)$, which is of the form $\Gamma_2$.   In particular, for infinitely many odd $q$ and several classes of polynomials $f$ and $h$, it has been proven that $\Gamma_2 (f(p_1,l_1),h(p_1,l_1))$ has girth 8 only when it is isomorphic to $\Gamma _\F (p_1l_1, p_1l_1^2)$ (see references preceding Question 1).  This suggests to focus the search for graphs $\Gamma_3$ with girth 8 to those that are not of the form $\Gamma_2$ (up to isomorphism).

Similarly to graphs $\Gamma_2$ or $\Gamma_3$,  we define a graph $\Gamma (f)= \Gamma _\F (f)$ in the following way.  Let $A$ and $B$ be two copies of $\F^2$. For $f\in \F [X,Y]$, consider a bipartite graph $\Gamma (f)= \Gamma _\F (f)$ with vertex partitions $A$ and $B$ and with edges defined as follows: for every $(a)=(a_1,a_2)\in A$ and every $[b]= [b_1,b_2]\in B$, $\{(a), [b]\} = (a)[b]$ is an edge in $\Gamma  (f)$ if
$$a_2+b_2 = f(a_1,b_1).$$
Consider a surjective $q$-to-1 map $\gamma$ of the vertex set of graph $\Gamma_3(f,g)$ to the vertex set of graph $\Gamma (f)$ defined by deleting the third component of each vertex. It is obvious that if $(p)[l]$ is an edge in $\Gamma_3(f,g)$, then $\gamma ((p))\gamma([l])$ is an edge of $\Gamma (f)$, i.e., $\gamma$ is a {\it graph homomorphism}. Moreover,  $\gamma$ is a covering homomorphism, meaning that here the image of the  neighborhood of every vertex $v$ of $\Gamma_3(f,g)$ is the neighborhood of $\gamma(v)$ in $\Gamma (f)$. In this case, we  will also say that $\Gamma_3(f,g)$ is a \textit{cover} of $\Gamma (f)$  or a {\it $q$-lift} of $\Gamma (f)$. Let $Aut(\cdot)$ denote the automorphism group  of a graph. It follows from our work that even when $|Aut(\Gamma (f))|$ is large, it is possible to find a $g$ such that $|Aut(\Gamma_3(f, g))|$ is much smaller than $|Aut(\Gamma (f))|$, and actually is as small as possible for graphs $\Gamma_3 (f, g)$.

In the next section we describe an infinite  family of graphs  that furnish such an example.

\section{Automorphisms of graphs $\Gamma_2$ and $\Gamma_3$ } \label{sec2}

%An automorphism of a graph $\Gamma$ is an isomorphism of the graph to itself.  All automorphisms of $\Gamma$ form a group (under the composition) which is denoted by $Aut(\Gamma)$.

We note that every graph of the form $\Gamma_2= \Gamma _\F (f(p_1,l_1), g(p_1 l_1))$ has the following automorphisms:   For all $a,b\in \F$,  consider the map $t_{a, b}$ on the vertex set of the graph $\Gamma_2$, such that  $t_{a, b} ((p_1,p_2,p_3))= (p_1,p_2+a,p_3+b)$ and  $t_{a, b} ([l_1,l_2,l_3])= [l_1,l_2-a,l_3-b]$.
We call these automorphisms {\it translations} of $\Gamma_2$. It is obvious that all $q^2$ of $t_{a, b}$ form a subgroup of $Aut(\Gamma_2)$ that is isomorphic to the additive group $(\F^2, +)$ of the vector space $\F^2$. Therefore, the order of $Aut(\Gamma _2)$ must be divisible by $q^2$.

We note that every graph of the form $\Gamma_3= \Gamma _\F (f(p_1,l_1), g(p_1,p_2, l_1))$ has the following  automorphisms: For each $b\in \F$,   consider a map $t_b$ on the vertex set of $\Gamma_3$, such that  $t_b ((p_1,p_2,p_3))= (p_1,p_2,p_3+b)$ and  $t_b ([l_1,l_2,l_3])= [l_1,l_2,l_3-b]$. We call these automorphisms {\it translations} of $\Gamma_3$. It is obvious that all $q$ of $t_b$ form a group that is isomorphic to the additive group $(\F, +)$ of the field  $\F$.  Hence,  the order of
$Aut(\Gamma _2)$ must be divisible by $q$.

  Suppose $q=p$. If we can exhibit a graph $\Gamma_3 $ such that $|Aut(\Gamma_3)|$ is not divisible by $p^2$, it will provide a negative answer to
  Question 1.
 Several of such graphs were found by computer, see  \cite{nassau2020}. One of them was simplified to the form   $$R = \Gamma _{\F_p} (p_1l_1, p_1p_2l_1(p_1+p_2+p_1p_2)).$$  It was checked by computer that $|Aut(R)| = p$ for all prime $p$, $3\le p\le 41$, but the proof that it holds for all odd $p$ was based on several assumptions which are proven in this paper.  Our notation $R$ for the graph relates to the adjective ``rigid", and it reflects the fact that $R$ has the  smallest possible automorphism group for graphs of type $\Gamma_3$. It was shown in \cite{nassau2020}, that the diameter of $R$ is $6$ for all odd $q$,  $q>3$,  and for $q=3$, the diameter is $7$.
  It is also easy to check that $R$ contains no 4-cycles.

The main result of this paper is the following.
\begin{thm}\label{main}
For $p \equiv 1 \pmod{3}$,  and $R = \Gamma _{\F_p} (p_1l_1, p_1p_2l_1(p_1+p_2+p_1p_2))$. Then $|Aut(R)| = p$, and $R$ cannot be presented in  the form  $\Gamma _{\F_p} (p_1l_1, h(p_1,l_1))$.
\end{thm}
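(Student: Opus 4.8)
The plan is to compute $\Aut(R)$ directly by exploiting the covering map structure. The key observation is that $R = \Gamma_3(p_1\ell_1, g)$ is a $p$-cover of $\Gamma(p_1\ell_1)$, and the latter graph is well understood: $\Gamma_{\F}(p_1\ell_1)$ is the incidence graph of a classical structure (it is essentially the point–line incidence graph associated with a conic/affine plane) whose automorphism group is known and large. First I would show that any $\varphi \in \Aut(R)$ must permute the fibers of the covering projection $\gamma: R \to \Gamma(p_1\ell_1)$ in a way compatible with a graph automorphism $\bar\varphi$ of $\Gamma(p_1\ell_1)$; since $\gamma$ is a covering homomorphism and $R$ contains no $4$-cycles, the fibers (the sets of vertices agreeing in the first two coordinates) are canonically recoverable from the graph structure — e.g. as maximal sets of vertices pairwise at distance $\geq 6$ sharing common "second neighborhoods", or more simply because two points lie in a common fiber iff they have the same neighborhood-of-neighborhood pattern modulo the translation subgroup. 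This gives a homomorphism $\Aut(R) \to \Aut(\Gamma(p_1\ell_1))$ whose kernel consists of automorphisms fixing every fiber setwise.

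Next I would analyze the kernel $K$. An element of $K$ fixes each fiber $\{(p_1,p_2,*)\}$ setwise, so on each fiber it acts as a permutation of the $p$ third-coordinate values; edge-preservation then forces, for each edge $(p)[\ell]$ of $\Gamma(p_1\ell_1)$, a constraint linking these permutations via the equation $p_3+\ell_3 = g(p_1,p_2,\ell_1)$. Writing the action on third coordinates as $p_3 \mapsto \sigma_{p_1,p_2}(p_3)$ and $\ell_3 \mapsto \tau_{\ell_1,\ell_2}(\ell_3)$, adjacency forces $\sigma_{p_1,p_2}(p_3) + \tau_{\ell_1,\ell_2}(\ell_3) = g(p_1,p_2,\ell_1)$ whenever $p_3+\ell_3 = g(p_1,p_2,\ell_1)$, which propagates to show all $\sigma$'s and $\tau$'s are the same translation $x \mapsto x+b$ (using connectedness of $\Gamma(p_1\ell_1)$ and the fact that the $g$-values cover enough of $\F$). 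Hence $K$ is exactly the group of translations $\{t_b\}$, of order $p$. This is the step I expect to be routine but which relies on $g$ being genuinely $3$-variable and "generic enough"; the specific polynomial $p_1p_2\ell_1(p_1+p_2+p_1p_2)$ must be checked to have no degeneracies that would enlarge $K$.

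The main obstacle — and the heart of the argument — is showing that the image of $\Aut(R)$ in $\Aut(\Gamma(p_1\ell_1))$ is trivial, i.e. that the only automorphisms of $R$ that descend to $\Gamma(p_1\ell_1)$ are those descending to the identity. The group $\Aut(\Gamma(p_1\ell_1))$ is large, so for most $\bar\varphi$ one must show there is no lift to $R$ preserving the second defining equation $p_3+\ell_3 = g$. The strategy here is to write a candidate automorphism of $R$ in the general form dictated by its action downstairs — typically $p_1 \mapsto \alpha(p_1)$, $p_2 \mapsto$ an affine-in-$p_2$ function with coefficients depending on $p_1$, and $p_3 \mapsto$ some function $\Phi(p_1,p_2,p_3)$ — substitute into the adjacency relations, and extract polynomial identities in $\F_p[p_1,p_2,\ell_1]$ that the data must satisfy. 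The congruence condition $p \equiv 1 \pmod 3$ will enter precisely here: it guarantees the existence (or controls the count) of cube roots / solutions to certain auxiliary equations arising from the factor structure of $p_1+p_2+p_1p_2$ and the degree-$4$ term, forcing the downstairs automorphism to fix enough structure that it must be trivial. Concretely I would identify a small set of graph-theoretic invariants of $R$ preserved by any automorphism (e.g. the distribution of lengths of shortest cycles through pairs of vertices, or the diameter-$6$ structure), pin down how $\alpha$ must act, reduce to a finite system of polynomial equations over $\F_p$, and show this system has only the trivial solution when $3 \mid p-1$. Once $\Aut(R)$ has order $p$, the second assertion follows immediately: every graph $\Gamma_2 = \Gamma_\F(f, h)$ has $\Aut$ divisible by $q^2 = p^2$ by the translation subgroup described above, so $R$, having automorphism group of order $p < p^2$, cannot be isomorphic to any such $\Gamma_2$.
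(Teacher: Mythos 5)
Your reduction of the second assertion to the first is exactly right (every $\Gamma_2$ carries the translation subgroup $\{t_{a,b}\}$ of order $p^2$, so $|\Aut(R)|=p$ rules out a $\Gamma_2$ presentation), and your identification of the kernel of a hypothetical map $\Aut(R)\to \Aut(\Gamma_{\F_p}(p_1\ell_1))$ as the translations $\{t_b\}$ is a sound computation (the alternating $g$-sums around closed walks generate $\F_p$ because $R$ is connected). But the two steps on which everything hinges are asserted rather than proved, and they are precisely where all the difficulty of the theorem lives. First, you need every $\varphi\in\Aut(R)$ to permute the fibers of the covering $\gamma$ before the homomorphism to $\Aut(\Gamma_{\F_p}(p_1\ell_1))$ even exists. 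Neither of your proposed characterizations works as stated: ``pairwise at distance $\ge 6$'' does not single out fibers, since the diameter of $R$ is $6$ and a typical vertex has many vertices at distance $6$ outside its fiber; and ``same neighborhood-of-neighborhood pattern modulo the translation subgroup'' is circular, since recognizing the translation subgroup inside $\Aut(R)$ is part of what is to be proved. Second, the triviality of the image in the order-$2p^3(p-1)^2$ group $\Aut(\Gamma_{\F_p}(p_1\ell_1))$ --- which you yourself flag as the heart of the matter --- is left as a program (``reduce to a finite system of polynomial equations and show it has only the trivial solution'') with no indication of why that system is tractable or why it has no nontrivial solutions.

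Your guess about where $p\equiv 1\pmod 3$ enters also does not match what actually makes the argument work. In the paper the congruence is used through the Hermite--Dickson criterion and Wan's bound to show that certain polynomials of the form $X^3+c_2X^2+c_1X+c_{-1}X^{q-2}$ are not permutation polynomials and take at most $q-3$ values; this drives a count of the sizes $r_3(v)$ of $3$-neighborhoods, which is the automorphism invariant that isolates the line classes $L_0=\{[0,0,r]\}$ and $L_1=\{[0,1,r]\}$ (and separates points from lines). From the invariance of $L_0$ and $L_1$ the paper bootstraps, via explicit short paths and the distance facts of Lemma 4.1, to show each coordinate function of an automorphism depends only on the corresponding coordinate and is eventually forced to be the identity on the first two coordinates and a translation on the third --- sidestepping the quotient map entirely. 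To rescue your plan you would need to supply a genuine graph-theoretic invariant that recovers the fibers and then carry out the full lifting analysis over the large group downstairs; as written, the proposal is an outline of the problem rather than a proof.
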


It is known that $|Aut(\Gamma_{\F_p}(p_1\ell_1))| = 2p^3(p-1)^2$, two different proofs can be found in Viglione \cite{Vig2002}. As $R$ is a $p$-lift of $\Gamma_{\F_p}(p_1\ell_1)$, by Theorem 2.1, it provides an example of our comment at the end of the previous section.

The idea of our proof of Theorem 2.1 is based on exhibiting two sets of vertices fixed by any automorphism of $R$ (see Section \ref{sec3}),  and then using these sets to conclude that  $Aut(R)\simeq (\F_p,+)$ (see Section \ref{sec4}).  We collect final remarks and state several open questions in Section~\ref{sec5}. In the appendix we present code of simple programs mentioned in the text.

 \section{Special subsets of lines in $R$}\label{sec3}

   In this section we consider  graphs $R = \Gamma _{\F_q} (p_1l_1, p_1p_2l_1(p_1+p_2+p_1p_2))$,  where $q$ is a power of an odd prime and $q\ge 7$.
   %Some proofs will require an assumption of a lower bound of $q$, like $q\ge 7$ or $q\ge 17$, and those will be mentioned when needed. It was verified by computer that the corresponding statements are actually correct for all odd prime power $q$, $7\le q \le 41$.

For $i\ge 1$, let  $R^i(v)$ denote the set of vertices of $R$ at distance $i$ from a vertex $v$.  $R^i(v)$ is  often referred to as the {\it $i$-neighborhood} of  $v$ in $R$.  Let $r_i (v)= | R^i(v) |$.  It is clear that for every vertex $v$ of $R$, $r_1(v)=q$, and, as  $R$ contains no 4-cycles,    $r_2(v)=q(q-1)$.  It is easy to check that the value of $r_3(v)$ is not the same for all vertices.  The question we wish to ask is:  For which lines $[l]= [l_1,l_2,l_3]$ of $R$  $r_3([l])$ is maximum? As  the size of a 3-neighborhood of a vertex of $R$ is preserved by any automorphisms of a graph, answering this question  will be helpful to us.

Due to the translation automorphisms on the third components of vertices of $R$, it is sufficient to consider lines $[A,B,0]$ only.
\medskip

\begin{prop}(\cite{nassau2020}) \label{2maxnum} For $q\ge 7$, in graph $R$ $$r_3([0,1,0]) =   q^3 - 4q^2 + 9q -8\;\;\text{and } \;\; r_3([0,0,0]) = q^3 -4q^2 +8q -6. $$
\end{prop}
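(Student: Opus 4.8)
The plan is to compute $r_3([l])$ directly by tracing paths of length three from a line $[l]=[A,B,0]$ and counting how many distinct points lie in $R^3([l])$. First I would describe $R^2([l])$: a vertex in $R^2([l])$ is a line $[l']$ adjacent to some point $(p)$ adjacent to $[l]$, and since $R$ has no $4$-cycles each such $[l']$ arises from a unique such path, so $R^2([l])$ is parametrised by pairs $(p_1, l_1')$ with $l_1'\neq l_1$ (here $p_1$ runs over $\F_q$ and determines the point on $[l]$, and $l_1'\neq p_1$'s value... ) — more precisely, writing the point adjacent to $[l]$ with first coordinate $s$ and the line at distance two reached through it with first coordinate $t\neq s$, one gets explicit formulas for the second and third coordinates of that line in terms of $A,B,s,t$. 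Then $R^3([l])$ consists of points adjacent to these distance-two lines; I would fix such a point's first coordinate $u$ (with $u\neq s$, to avoid backtracking) and write down the second and third coordinates of the resulting point as polynomials in $A,B,s,t,u$. The count $r_3([l])$ is then the number of distinct triples $(u, \,\text{2nd coord}, \,\text{3rd coord})$ obtained as $(s,t,u)$ ranges over the allowed set.

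The combinatorial heart is an inclusion–exclusion / coincidence count: two paths $(s,t,u)$ and $(s',t',u')$ reach the same point of $R^3([l])$ exactly when $u=u'$ and the two coordinate polynomials agree. For fixed $u$, this reduces to analysing when two \emph{distance-two lines} through the (now fixed) point with first coordinate $u$ coincide or lead to the same point — equivalently, counting the fibers of the map $(s,t)\mapsto$ (the distance-two line) composed with "project through $u$". Because $R$ has no $4$-cycles, the only collisions come from the two natural sources: genuinely different distance-two lines that happen to share the neighbour $u$ (this is controlled by a low-degree polynomial equation in $s,t$ whose number of solutions I would bound, using that $q$ is not too small), plus the "degenerate" paths where $t$ or $u$ equals one of the earlier coordinates. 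Carrying this out for the two specific lines $[0,1,0]$ and $[0,0,0]$ makes the governing equations concrete (the coefficients involving $A=0$ and $B\in\{0,1\}$), and the difference between the two answers, $q^2 - q + 2$ versus $q^3-4q^2+8q-6$ ... i.e. the discrepancy of $q-2$ between the two formulas, will come precisely from the extra symmetry/degeneracy present when $B=0$ (the point $(0,0,0)$-type configuration admitting one more coincidence than the $B=1$ case).

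The main obstacle I expect is the coincidence analysis for a \emph{fixed} value of $u$: one must show that the relevant system (equality of the two quadratic-in-$p_2$, cubic-type coordinate expressions coming from $g(p_1,p_2,l_1)=p_1p_2l_1(p_1+p_2+p_1p_2)$) has exactly the "expected" number of solutions and no more, which amounts to checking that a certain curve/variety in the $(s,t)$-plane has no unexpected extra components over $\F_q$. This is where the hypothesis $q\ge 7$ is used — to rule out sporadic coincidences that could occur for $q=3,5$ — and it is the only place where a careful, somewhat lengthy polynomial computation seems unavoidable. Everything else (parametrising $R^i([l])$, the no-$4$-cycles bookkeeping, and assembling the final count) is routine once the local fiber sizes are pinned down. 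Since this proposition is quoted from \cite{nassau2020}, I would at this point simply cite that reference for the detailed verification and record the two values, as the statement of Proposition~\ref{2maxnum} already does.
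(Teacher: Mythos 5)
Your proposal stops short of a proof: after setting up the three-parameter family of non-backtracking $3$-paths and announcing that the ``combinatorial heart'' is a coincidence count, you defer that entire count to \cite{nassau2020}. But the coincidence count \emph{is} the proposition; everything you do carry out (parametrising the paths, the non-backtracking bookkeeping) is the routine part. Moreover, the framework you choose --- fix the endpoint's first coordinate and analyse a curve in the $(s,t)$-plane, worrying about unexpected components --- makes the collision analysis look harder than it is. The missing idea is a change of variables: writing a $3$-path as $[A,B,0]\sim(a,*,*)\sim[x,*,*]\sim(b,c,*)$ with $c=Aa-ax+bx-B$, one solves for $x=(c-Aa+B)/(b-a)$ and uses $(a,b,c)$ as parameters, where $(b,c)$ are the endpoint's first two coordinates (ranging over all pairs with $c\neq Ab-B$) and $a\neq b$ is free. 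Distinct pairs $(b,c)$ then give distinct endpoints automatically, so no two-dimensional coincidence analysis is needed: $r_3([A,B,0])$ is the sum, over the $q(q-1)$ admissible pairs $(b,c)$, of the number of values taken by the third coordinate as a function of $a$ alone. For $A=0$ that function has the form $N(b,c)/(b-a)+\mathrm{const}$, whose range has size $q-1$ when $N(b,c)\neq 0$ and size $1$ when $N(b,c)=0$; the whole computation reduces to counting the zero locus of $N$, which has $2q-3$ elements for $[0,0,0]$ and $2q-4$ for $[0,1,0]$, giving $(q^2-3q+3)(q-1)+(2q-3)=q^3-4q^2+8q-6$ and $(q^2-3q+4)(q-1)+(2q-4)=q^3-4q^2+9q-8$. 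This also explains the difference $q-2$ you noticed: $[0,0,0]$ has exactly one more ``degenerate'' pair $(b,c)$ than $[0,1,0]$.

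In particular, no variety in the $(s,t)$-plane needs to be shown free of extra components, and the hypothesis $q\ge 7$ plays no essential role in this count, so your diagnosis of where the difficulty lies and of where the hypothesis enters does not match the actual argument. As written, the proposal neither completes the count nor would its intended route (a fiber analysis over $(s,t)$ for each fixed $u$) be easier than the one-variable range count above; and citing \cite{nassau2020} cannot substitute for the verification when the statement being proved is the one quoted from that reference.
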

Our interest in $r_3([0,1,0])$  and  $r_3([0,0,0])$ is due to the fact that they  are the maximum and the second maximum, respectively, of the sizes of 3-neighbourhoods of lines in $R$. For completeness of the exposition, a rephrased  proof of  Proposition \ref{2maxnum} appears as a part of our proof of Theorem \ref{0001AB} below,  and it will exhibit the notions needed for the proof of the theorem.
\begin{thm}\label{0001AB} Let $q\ge 7$,  and $q\equiv 1 \pmod{3}$. Then

(i) For any line $[A,B, C]$ of $R$ such that   $(A,B) \neq (0,0), (0,1)$, $$ r_3([A,B,C]) < r_3([0,0,0]) < r_3([0,1,0]).$$

(ii) For any point  $(p)= (p_1,p_2,p_3)$ of $R$,   $$ r_3((p_1,p_2,p_3)) < r_3([0,0, 0]).$$
\end{thm}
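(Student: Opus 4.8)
The plan is to set up an explicit combinatorial count of $r_3([A,B,C])$ that reduces, via the translation automorphisms on the third coordinate, to a count depending only on $(A,B)$, and then to extract from this count a polynomial (or piecewise-polynomial) expression in $q$ whose coefficients depend on arithmetic conditions involving $A$ and $B$. Since $C$ is irrelevant, write the line as $[A,B,0]$. First I would describe $R^1([A,B,0])$: each neighbour is a point $(x, f(A,x)-B, g(A,B,x))$ for $x\in\F$, where $f(p_1,\ell_1)=p_1\ell_1$ and $g(p_1,p_2,\ell_1)=p_1p_2\ell_1(p_1+p_2+p_1p_2)$. Then $R^2([A,B,0])$ is obtained by, for each such point, taking its $q-1$ other neighbouring lines; because $R$ has no $4$-cycles these are all distinct. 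Finally $R^3([A,B,0])$ is the set of points adjacent to some line in $R^2$ but not already in $R^1$; the size $r_3$ is $q(q-1)^2$ minus the number of ``collisions'', i.e. the number of incidences $(\text{line in }R^2)\,(\text{point})$ where the point lies in $R^1([A,B,0])$ or where two different lines of $R^2$ share a common neighbour point. So the heart of the computation is counting these coincidences, and each coincidence translates into a system of polynomial equations over $\F$ (a pair of $4$-cycle-like or $6$-cycle-like closed walks), whose number of solutions I must evaluate.

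Concretely, a collision corresponds to a configuration $[A,B,0] \sim (p) \sim [\ell] \sim (p') $ with $(p')\in R^1([A,B,0])\cup\{$something forcing coincidence$\}$, and after eliminating the second and third coordinates using the defining equations one is left with a system in the first coordinates $p_1,\ell_1,p_1'$ (three free field variables) subject to two polynomial constraints coming from the $g$-equations. The key step is therefore: for each pair $(A,B)$, count the number of solutions $(p_1,\ell_1,p_1')\in\F^3$ of the resulting system, and show this count is strictly smaller (by a term of order $q$, or of order $1$ in the borderline case) for every $(A,B)\ne(0,0),(0,1)$ than for $(A,B)=(0,0)$, which in turn is smaller than for $(A,B)=(0,1)$. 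I expect the system to reduce, after eliminating one variable, to asking how many roots in $\F$ a certain polynomial $\Phi_{A,B}(t)$ has, or equivalently how often a quadratic/cubic in $t$ with coefficients polynomial in $A,B$ is a nonzero square, has a prescribed value, etc. This is where the hypothesis $q\equiv 1\pmod 3$ enters: it controls the number of cube roots of unity and hence the factorization behaviour of the cubic $p_1+p_2+p_1p_2$ (and related expressions like $t^3-1$) over $\F$, pinning down the exact multiplicities. Part of this analysis is exactly the rephrased proof of Proposition \ref{2maxnum}, which handles the two extremal cases; the new content is the strict inequality for all other $(A,B)$.

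For part (ii), the same machinery applies but with the roles of points and lines interchanged in the first step: $r_3((p_1,p_2,p_3))$ is computed by an analogous count, again reducing to the number of solutions of a polynomial system in a few field variables. By the translation automorphisms $t_b$ one may take $p_3=0$, but unlike the line case one cannot also normalize $p_2$, so the count is a function of $(p_1,p_2)$. The goal is the uniform bound $r_3((p))<r_3([0,0,0])$ for \emph{all} points; I would argue that the generic point has $r_3$ strictly below the $q^3-4q^2+O(q)$ threshold, and that no special point can reach it, by showing the corresponding collision count is always at least, say, $q^2+cq$ larger than the best line's collision count.

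The main obstacle I anticipate is the bookkeeping in the collision count: correctly enumerating \emph{all} ways two length-$\le 3$ walks from $[A,B,0]$ can end at the same point (including degenerate cases where intermediate vertices coincide), ensuring nothing is double-counted, and then evaluating each resulting solution-count exactly rather than asymptotically — because the two extremal values $q^3-4q^2+9q-8$ and $q^3-4q^2+8q-6$ differ only in the linear and constant terms, so an error of $O(q)$ would be fatal. The arithmetic input $q\equiv 1\pmod 3$ must be used precisely at the points where a cubic polynomial's splitting type changes the count by exactly such a lower-order amount; isolating which $(A,B)$ make a ``bad'' (count-raising) factorization occur, and checking that these never over-perform the case $(0,1)$, is the delicate part of the argument.
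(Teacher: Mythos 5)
Your overall framing (normalize the third coordinate by translations, parametrize length-$3$ walks, reduce to polynomial conditions over $\F$, and use $q\equiv 1\pmod 3$ through the behaviour of cubics) matches the paper's setup, but the core counting mechanism you propose has a genuine gap. You compute $r_3$ as ``total walks minus collisions,'' where collisions are counted as solutions of a polynomial system in the first coordinates. What such a system naturally counts is the number of \emph{pairs} of walks ending at the same point, i.e.\ the second moment $\sum_v m_v^2$ of the multiplicities $m_v$, whereas $r_3$ is the number of distinct endpoints, $\sum_v 1 = \sum_v m_v - \sum_v(m_v-1)$. These agree only when every multiplicity is at most $2$; here the relevant fibers can have size up to $4$, so a collision-pair count does not determine $r_3$, and moment inequalities (Cauchy--Schwarz and the like) lose far more than the $O(q)$ margin separating $r_3([0,0,0])=q^3-4q^2+8q-6$ from $r_3([0,1,0])=q^3-4q^2+9q-8$. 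Likewise, counting points on a surface in $\F^3$ cut out by two polynomial constraints would carry Weil-type error terms of order at least $q$, again fatal at this precision.

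The paper sidesteps this by fixing the first two coordinates $(b,c)$ of the endpoint and reducing, for each of the $q(q-1)$ admissible pairs, to the exact size of the value set of a \emph{univariate} rational function of the walk parameter, of the form $x^3+c_2x^2+c_1x+c_{-1}x^{q-2}$ (or $t+\gamma/t$ when $A=0$). The decisive technical content, which your proposal does not supply, is the chain: the Hermite--Dickson criterion shows this polynomial is not a permutation polynomial (this is exactly where $q\equiv 1\pmod 3$ enters, via the nonexistence of cubic permutation polynomials); Wan's value-set theorem then gives $\#V\le q-2$; and a further interpolation argument excludes the value $q-2$, yielding $\#V\le q-3$, which is precisely the per-pair saving needed to beat $r_3([0,0,0])$. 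You correctly identify that the $q\equiv 1\pmod 3$ hypothesis must act on cubics and that the bookkeeping must be exact to $O(q)$, but without a device that pins down individual value-set sizes to within one or two of $q$, the strict inequalities in both parts (i) and (ii) are out of reach.
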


\begin{proof}
For all odd prime powers $q$, $7 \leq q \leq 41$, the statement has been verified by computer.
Therefore in all our arguments, we can assume $q \geq 43$, even if a smaller lower bound of $q$  suffices.

We write $x\sim y$, if  $\{x,y\}$ is an edge of $R$.   Let $$[A,B,0] \sim (a,*,*) \sim [x, *,*]\sim (b, A a-a x+b x-B, *)$$  be a path of length 3 starting at a line $[A,B,0]$. Here $*$ represents expressions whose explicit form is not important to us. Hence,  $A\neq x$ and $a\ne b$, as otherwise the corresponding vertices coincide. In what follows we use a Maple program to facilitate straightforward symbolic computations, the code can be found in the appendix. Let  $c = A a-a x+b x-B$. Then  $x= (c-Aa +B)/(b-a)$,
and the condition $x\neq A$ is equivalent to ($c\neq Ab-B$ and   $a\neq b$).  Substituting this expression for $x$ to $(b, A a-a x+b x-B, *)$,  we obtain the following representation of $R^3([A,B,0])$:
\begin{equation}\label{3AB0}
R^3([A,B,0]) = \Big\{\Big( b,c, \frac{P_{A,B}(b,c;a)}{b-a}\Big):  \;\; a,b,c,\in \F, c\neq Ab-B, a\neq b \Big\},
\end{equation}
where $P_{A,B}(b,c;a)$,  viewed as a polynomial of $a$, is
%\begin{equation}\label{P3AB}
\begin{multline}\label{P3AB}
%\begin{split}
P_{A,B}(b,c;a) =
A^2(Ab-B-c)a^4+A(A-2B+1)(Ab-B-c)a^3-B(2A-B+1)(Ab-B-c)a^2+ \\
(-Ac^2b^2+AB^2b-Ac^2b-ACb^2-B^3-B^2c)a+cb(cb+c+b)(c+B).
%\end{split}
%\end{equation}
\end{multline}

Specializing $(A,B)= (0,0)$  and $(A,B)= (0,1)$,  we obtain
\begin{equation} \label{300}
R^3([0,0,0]) = \Big\{\Big( b,c, \frac{bc^2(bc + b+c)}{b-a}\Big):  \;\; a,b,c,\in \F,  c\neq 0, a\neq b \Big\}
\end{equation}
and
\begin{equation} \label{301}
R^3([0,1,0]) = \Big\{\Big( b,c, \frac{(bc(bc+c+b)-b)(c+1)}{b-a} + (c+1)\Big):  \;\; a,b,c,\in \F, b\neq a, c\neq -1 \Big\}.
\end{equation}
The representations  (\ref{3AB0}) and (\ref{300}) allow  us to compare $r_3([0,0,0])$ and  $r_3([A,B,0])$ by going over all  $q(q-1)$ choices of $(b,c)$,   and,  for each fixed $(b,c)$, by  comparing the sizes of ranges of the functions defining the 3rd components of the verices from the corresponding 3-neighborhoods, namely   $$ \frac{bc^2(bc + b+c)}{b-a}\;\;\; \text{and}\;\;\; \frac{(bc(bc+c+b)-b)(c+1)}{b-a} + (c+1),$$  where these functions are viewed as functions of $a$ on $\F\setminus \{b\}$.
\medskip

$\;\;\;$   \underline{Case 1 (\textit{i}):  Suppose $A=B=0$.}  We will use equality (\ref{300}).  If $bc^2(bc + b+c)\neq 0$,  then $bc^2(bc + b+c)/(b-a)$ takes $q-1$  distinct values,  one for each $a$ distinct from $b$, i.e., the  greatest possible number of values for any function of $a$ with domain of size $q-1$. On the contrary,  when $bc^2(bc + b+c)= 0$, then  $0/(b-a) = 0$ for each $a$ distinct from $b$. This corresponds to the  least possible number of values, namely 1,  that any (non-empty) function of $a$ can have.
%\bigskip

%\underline{Here is how I wanted to continue. }
%The case $bc^2(bc + b+c)\neq 0$, accounts for the ``majority" of the $q(q-1)$ pairs $(b,c)$.
Note that $bc^2(bc + b+c)= 0$ and $c\neq 0$ imply that $b\neq -1$. Therefore all solutions of the system    $(c\ne 0 \;\text{and}\; bc^2(bc + b+c)= 0)$ can be describe as
$$Z = \{(0,c): c\neq 0\}\; \bigcup \;\{(b, -b/(b+1)): b\neq 0,  b\neq -1\}.$$ Hence, $|Z|= q-1 + q-2 = 2q-3$.   Therefore, the part of  $R^3([0,0,0])$ formed by points $(b,c,0)$ with $(b,c)\in Z$ contains $2q-3$ points,  and the remaining part, consisting of points of the form $(b,c, bc^2(bc + b+c)/(b-a)$, with  $(b,c)\not\in Z$ contains
$$[q(q-1) - (2q-3)](q-1) = (q^3-3q^2 +3)(q-1)= q^3-4q^2 + 6q -3$$ points. This proves that
$r_3([0,0,0]) = (q^3-4q^2 + 6q -3) + (2q-3) = q^3-4q^2+8q - 6$, as stated in  Proposition \ref{2maxnum}.
\medskip

$\;\;\;$ \underline{Case 2 (\textit{i}):  Suppose $A=0, B=1$.}

We will use equality (\ref{301}).  Note that the system   $((bc(bc + b+c)- b)(c+1)= 0 \;\text{and}\;c\neq -1)$
is equivalent to $(bc(bc + b+c) -b= 0 \;\text{and} \; c\neq -1)$,
which is equivalent to
$$(b=0 \;\text{and} \; c\neq -1 ) \; \text{or} \; [(b\neq 0 \;\text{and}\; c(bc + b +c) -1 =0)].$$
The first system gives  $q-1$ points of the form $(0, c, c+1)$.
The second system is equivalent to  $ (b(c+1)c =(1-c)(1+c) \;\text{and}\; b\neq 0 \;\text{and}\; c\neq -1)$, or to just $( bc =(1-c) \;\text{and}\; b\neq 0 \;\text{and}\; c\neq -1)$. For $c=1$,  it has no solution. For $c\neq 1, -1$, it has a unique solution $b=(1-c)/c$.
Therefore the second system gives  $q-2$ points of the form $((1-c)/c, c, c+1)$, $c\neq 0,-1)$.  Hence, there are $(q-1) + (q-2) = 2q-3$  pairs $(b,c)$ such that each contributes 1 point to $R^3([0,1,0])$.  They are of the form  $(b, c, c+1)$.
Of the  remaining possible pair $(b,c)$, $c\neq -1$, each contributes $q-1$ points of the form $(b,c, (bc(bc+c+b)-b)(c+1)/(b-a) + (c+1))$, when $a$ runs over all elements of $\F$ except $b$. As there are $q(q-1) - (2q-3)= q^2-3q+3$ of those pairs, they all  contribute $(q^2-3q+3)(q-1)$ points. Therefore,
$r_3([0,1,0]) = (q^2-3q+3)(q-1) + (2q-3) = q^4 - 4q^2 +9q -8$. This proves the remaining case of Proposition \ref{2maxnum}.

 \medskip

$\;\;\;$ \underline{Case 3 (\textit{i}):  Suppose $A=0, B\ne 0,1$.}

Using formula (\ref{P3AB}),  for $A=0$,  we obtain:
$$P_{0,B}(b,c;a) =  -B(B+c)[(B-1)a^2 + B a]+ cb(cb+c+b)(B+c). $$
%As $c\neq 0\cdot b -B$, $B+c\neq  0$.   As $B\ne 0,1$,  $P_{0,B}(b,c;a)$ is a %quadratic polynomial of $a$.
Setting $t=b-a$, it is each to check we can rewrite $P_{0, B}(b, c; a)/(b-a)$ in the form
$$ \frac{P_{0,B}(b,c;a)}{b-a} =  \alpha_1 t + \alpha_0  + \frac{\alpha_{-1}}{t} =: f(t),  $$ where $\alpha_1 =  (B-1)B(B+c)$, $\alpha_0 = B(B+c)(2(B-1)b + B$, and $\alpha_{-1} = b(B+c)^2(Bb-bc+B-b-c).$
Note that the condition $c\neq 0\cdot b -B$, is equivalent to $B+c\neq  0$.  When $a$ changes over $\F\setminus \{b\}$, $t$ changes over $\F^\times $,  and for fixed $B\neq 0,1$, $b$ and $c$, $B+c\neq 0$,  the function $f$ takes as many values as the function  $g$, which is obtained from $f$ by dropping $\alpha_0$ and dividing by $\alpha_1 \neq 0$: $$g(t) := t + \frac{b(Bb - bc +B -b-c)}{t} = t + \frac{\gamma}{t},  \; t\ne 0,$$
where $\gamma = b(Bb - bc +B -b-c)$. For $\gamma = 0$,  $g$ takes $q-1$ values. This happens when
$b=0$, or $b= -(B^2-c^2)/(2B^2-c^2-2B-c)$, provided $2B^2-c^2-2B-c\neq 0$. Hence, $\gamma=0$ for at most $ 2q-1 $ pairs $(b,c)$, which are of the form  $\{(0,c),  c\neq -B\}$, or
$\{(-(B^2-c^2)/(2B^2-c^2-2B-c), c), c\neq -B, 2B^2-c^2-2B-c\ne 0.\}$. For  other pairs $(b,c)$, i.e., for at least $q(q-1) - (2q-1) = q^2-3q+1$ pairs,  $\gamma \neq 0$,  and  $g$ takes equal values for every  $t_1\neq 0$ and  $t_2 = \gamma /t_1$. Hence, for these $(b,c)$, $g$ takes at most $(q-1)/2$ distinct values.
Therefore, $$r_3 ([0,B,0]) \leq  (2q-1)(q-1) + (q^2-3q+1)(q-1)/2 <   q^3-4q^2+8q - 6 = r_3([0,0,0]),$$
for all $q\ge 5$.  This proved the second inequality of part (i) of Theorem \ref{0001AB} in this case.

 \medskip

$\;\;\;$ \underline{Case 4 (\textit{i}):  Suppose $A\neq 0$.}

We will show that in this case, for fixed $A,B,b,c\in \F_q$,  $A\neq 0$, $c\neq Ab-B$, the expression $P_{A,B} (b,c; a)/(b-a)$,  considered as a rational function of $a$ on $\F_q\setminus \{b\}$,  takes at most $q-3$ distinct values.
 Then, by (\ref{3AB}),
$$r_3 ([A,B,C]) \leq q(q-1)(q-3) = q^3 -4q^2 + 3q < q^3 -4q^2 +8q -6 = \gamma_3([0,0,0]),  $$
and this will end the proof of part (i) of  Theorem  \ref{0001AB}.
\bigskip

Using (\ref{P3AB}),  and $x= b-a$, it is easy to check that we can rewrite $P_{A,B}(b,c;a)/(b-a)$ in the form
$$ \frac{P_{A,B}(b,c;a)}{b-a} = \alpha_3 x^3 + \alpha_2 x^2 + \alpha_1 x + \alpha_0 + \frac{\alpha_{-1}}{x} =: h(x),$$
where $\alpha_3 = A^2(Ab-B-c) \neq 0$, as $A\neq 0$ and $c\neq Ab-B$.
The explicit expressions for $\alpha_i$, $i=2,1,0,-1$,
as functions of $A,B, b,c$ will not matter to us.
%$$\alpha_2 = -4A^2(Ab-B-c)b-(A(A-2B+1))(Ab-B-c), $$
%$$\alpha_1 = 6A^2b^2Ab-6A^2b^2B-6A^2b^2c + 3A(A-2B+1)(Ab-B-c)b-(B(2A-B+1))(Ab-B-c),$$ and
%$$\alpha_{-1} = A^2(Ab-B-c)b^4+(A(A-2*B+1))(Ab-B-c)b^3-(B(2*A-B+1))(Ab-B-c)b^2+ $$
%$$(-Ac^2b^2+AB^2b-Ac^2b-B^3-B^2c-ACb^2)b+(cb(cb+c+b))(c+B).$$

It is clear that the function $h$ takes as many distinct values on  $ \F^\times $ as the function $j$, obtained from $h$ by dropping the constant term $\alpha_0$, and dividing all coefficients by the nonzero coefficient $\alpha_3$:
 $$j(x) := x^3+ c_2 x^2 + c_1 x  + \frac{c_{-1}}{x},$$
where, again,   the explicit expressions for $c_i$, $i=2,1,-1$, will not matter to us.
% $$c_2 = \alpha_2/\alpha_3 = (4Ab + A - 2B + 1)/A, $$
%$$c_1 = \alpha_1/\alpha_3 = (6A^2b^2 + 3A^2b - 6ABb - 2AB + 3Ab + B^2 - B)/A^2,$$
%$$c_{-1} = \alpha_{-1}/\alpha_3 = b(Ab^2 + Ab - Bb + bc - B + b + c)(Ab - B - c)/A^2.$$
%Let $f: \F_q\to \F_q$,   $V_f$ be the set of all (distinct) values of $f$, we denote and $\# V_f$ denotes the number of elements in $V_f$. Then $f$ can be represented by a polynomial in $\F_q[X]$ of degree at most $q-1$.  When  $\# V_f = q$, i.e., $f$ is a bijection on $\F_q$, then $f$ is called a {\it permutation polynomial} or just PP.

As for each  $t\in \F^\times$, $1/t= t^{q-2}$,  the rational function $j:   \F ^\times \to \F$,
can be represented by  a polynomial function $j(x)= x^3+ c_2 x^2 + c_1x  + c_{-1}x^{q-2}$.  The same polynomial $J=j= X^3+ c_2 X^2 + c_1X  + c_{-1}X^{q-2}$ can be used to define a function $J$ on $\F$, by assuming that $J(0)=0$ and $J(t)=j(t)$ for all $t\in \F^\times$.
%When  $\# V_f = q$, i.e., $f$ is a bijection on $\F_q$, then $f$ is called a {\it permutation polynomial} (PP) of $\F$.
Hence, function $j$ is a restriction of function $J$ to $\F^\times$.  For any function $f:\F\to \F$, let $V_f$ denote the range of $f$,  and $\#V_f = |V_f|$.  Then   $\#V_j \le \#V_J$, and the inequality is strict if only only if $0\not\in V_j$.

Our goal now is to prove that under certain conditions on $q$, $\#V_j \le q-3$. As was remarked at the beginning of Case 4,  this will suffices to finish the proof of part (i) of Theorem \ref{0001AB}.

If  $f:\F\to \F$ is a bijection, then $V_f = \F$, $\#V_f = q$,  and $f$ is called a {\it permutation polynomial} (or PP)  in $\F$.
\begin{lem}\label{lem} Let $q\ge 17$,  $q\equiv 1 \pmod{3}$,  and
 $J = X^3+ c_2 X^2 + c_1X  + c_{-1}X^{q-2} \in \F[X]$.
 Then the following holds.
 \begin{enumerate}
 \item  $J$ is not a PP.\label{lem1}
 \item $\#V_j \le \#V_J \le q-2$.  If, in addition,  $0\not\in V_j$, then $\#V_j \le q-3$. \label{lem2}
 \end{enumerate}
 \end{lem}
 \begin{proof}  1.   We will need the following  fact, often  referred to as Hermite-Dickson Criterium (see Hermite \cite{hermite1863}, Dickson \cite{dickson1897} or  \cite{LN97}, \cite{handbook}).
\begin{prop} {\rm (Hermite-Dickson)}\label{Hermite} Let $p$ be the characteristic of $\F=\F_q$. A polynomial $f \in \F[X]$ is PP if and only if the following two conditions hold: (i) $f$ has exactly one root in $\F$, and, (ii) for each integer $t$ with $1\le t\le q-2$ and $t\not\equiv 0 \pmod{p}$,  the reduction of $(f(X))^t$  modulo $(X^q-X)$ has degree at most $q-2$.
	\end{prop}
 Let us  show that the condition (ii) of   Proposition \ref{Hermite} fails for our polynomial $J$.  We can equate the the coefficients at $X^{q-1}$ in $J^n$  modulo $(X^q-X)$, $n=2,3,4$,   to zero. This leads to the following system of equations:
\begin{eqnarray}
% \nonumber to remove numbering (before each equation)
2 c_1c_{-1}&=& 0 \;\; \;\text {(using $J^2$)} \label{Jsq}\\
 3c_2c_{-1}^2  &=& 0 \;\; \; \text {(using $J^3$)}\label{Jcube}\\
4c_{-1}^3 +  6c_1^2c_{-1}^2   &=& 0 \;\; \; \text {(using $J^4$ )}\label{Jfour}
\end{eqnarray}
We wish to remark that for $q<17$, the  the coefficient at $X^{q-1}$ in $J^4$  modulo $(X^q-X)$ takes different forms than in (\ref{Jfour}).  This explains the condition $q\ge 17$.
%in for q=13, there is extra + 1 in J^4; for q=11, there are also + 6c_2^2+4c_1
%J^3 and J^2 are given same formula always.
%

For $p\ge 5$, if $c_{-1} \neq 0$,  then (\ref{Jsq}) gives $c_1=0$ and  (\ref{Jfour}) gives  $c_{-1}=0$, a contradiction.
Therefore $c_{-1} = 0$
  and $J=c_1X + c_2X^2 + X^3$ is a monic cubic polynomial.  It is well known, see Dickson \cite{dickson1897} or \cite{LN97}, \cite{handbook},  that for  $q\equiv 1 \pmod{3}$,
  a cubic polynomial  cannot be a PP. (Moreover,  if $d>1$ is a  divisor of  $q-1$,
  $q$ being  any prime power, then there is no PP of degree $d$.) Hence,  $J$ is not a PP.
 \bigskip

 {\bf Remark.}  If $q \equiv 0,2 \pmod{3}$,  there exist cubic PP's.   In this case Theorem \ref{0001AB} still holds, but the proof becomes more subtle. As it does not affect our main Theorem \ref{main}, we decide not to pursue it here.
\bigskip

   2. If $J$ is not a PP, then $\#V_J \le q-1$.  The following result allows us to decrease the upper bound by 1.
\begin{prop}{\rm (Wan \cite{Wan})} \label{Wan}  If a polynomial $f$ of degree $n\ge 1$ is not a PP of $\F$,  then $$\#V_f \leq q - \Big\lceil \frac{q-1}{n} \Big\rceil.$$
\end{prop}
As $J$ has degree $q-2$ or 3,  and is not a PP of $\F$,
%and $0\in V_j$,  and by part \ref{lem1} if $,
Proposition  \ref{Wan}, implies
$$\#V_J \leq q - \Big\lceil \frac{q-1}{q-2} \Big\rceil = q-2. $$
As $\#V_j \le \#V_J$, if $0\not\in V_j$, then $\#V_j =\#V_J -1 \le q-3$. This ends the proof of Lemma~\ref{lem}.
\end{proof}

What left is to analyze the case $\#V_j= q-2$,  as for   $\#V_j\leq  q-3$ the theorem has been proven. So  we assume that $\#V_j= q-2$. Then $0\in V_j = V_J$, and
 function $j$, having  domain  $\F^\times$, takes some value $c$ exactly twice,  and each other value  exactly ones. Let $j(x_1)=j(x_2)=c$, $x_1\neq x_2$. Then $\F \setminus V_j = \{a,b\}$, where $a, b$ are  distinct, and none of $a$ or $b$ is equal to $c$ or to 0.   Consider the following three polynomial functions on $\F$:

%$$J_a(x) = \frac{\prod_{t\in \F,\, t\neq x_1} (x-t)}{\prod_{t\in \F,\, t\neq x_1} (x_1-t)}\,a + J(x) -c,\;\;\;\;\;
%J_b(x) = \frac{\prod_{t\in \F,\, t\neq x_2} (x-t)}{\prod_{t\in \F,\, t\neq x_2} (x_2-t)}\,b + J(x)-c,$$
%$$J_{a,b}(x) = \frac{\prod_{t\in \F,\, t\neq x_1} (x-t)}{\prod_{t\in \F,\, t\neq x_1} (x_1-t)}\,a +  \frac{\prod_{t\in \F,\, t\neq x_2} (x-t)}{\prod_{t\in \F,\, t\neq x_2} (x_1-t)}\,b + J(x)-c.$$
% As the product of all elements of $\F^\times$ is 1, we have:
% $$J_a(x_1) = a, \; J_a(x_2) = 0, \; J_a(0) = -a/x_1 -c,$$
% $$J_b(x_1) = 0, \; J_b(x_2) = b, \; J_b(0) = -b/x_2 -c,$$
% $$J_{a,b}(x_1) = a, \; J_{a,b}(x_2) = b, \; J_{a,b}(0) = -a/x_1 - b/x_2 -c.$$
% The degrees of polynomials $J_a, J_b$, and $J_{a,b}$ are at most $q-2$.  If one of them takes exactly $q-1$ values, then it is not a PP, and we get a contradiction with Theorem \ref{Wan}.   Therefore each of these polynomials must take  $q-2$ or $q$ values.
%
%Suppose $c=0$. For $t\neq 0,x_1$, $J_a(t)= J(t)=j(t)$. This gives already $q-2$ distinct values of $J_a$, each different from $a$. Hence, $\#V_{J_a} \geq q-1$. So we suppose $\#V_{J_a} = q$. This is possible if and only if $J_a(0)= -a/x_1= b$.
%Similarly we conclude that $\#V_{J_b} = \#V_{J_{a,b}}= q$, which happens if and only if $J_b(0)= -b/x_2= a$, and  $J_{a,b} (0) = -a/x_1 - b/x_2= 0$. Therefore, we obtain the system  $$-a/x_1= b, \;\;\;  -b/x_2= a,\;\;\;  -a/x_1 - b/x_2= 0.$$
%As $a,b\neq 0$, it implies  $x_1=x_2=1$, which contradicts the assumption that $x_1\neq x_2$. Hence, if $c=0$, then $\#V_j \le q-3$.
%Suppose $c\neq 0$.

$$J_{a-c}(x) = \frac{\prod_{t\in \F^\times,\, t\neq x_1} (x-t)}{\prod_{t\in \F^\times,\, t\neq x_1} (x_1-t)}\,(a-c) + J(x) -c,\;\;\;\;\;
J_{b-c}(x) = \frac{\prod_{t\in \F^\times,\, t\neq x_2} (x-t)}{\prod_{t\in \F^\times,\, t\neq x_2} (x_2-t)}\,(b-c) + J(x)-c,$$
$$J_{a-c,b-c}(x) = \frac{\prod_{t\in \F^\times,\, t\neq x_1} (x-t)}{\prod_{t\in \F^\times,\, t\neq x_1} (x_1-t)}\,(a-c) +  \frac{\prod_{t\in \F^\times,\, t\neq x_2} (x-t)}{\prod_{t\in \F^\times,\, t\neq x_2} (x_1-t)}\,(b-c) + J(x)-c.$$
 As the product of all elements of $\F^\times$ is $-1$, we have:
 $$J_{a-c}(x_1) = a-c, \; J_{a-c}(x_2) = 0, \; J_{a-c}(0) = (c-a)/x_1 -c,$$
 $$J_{b-c}(x_1) = 0, \; J_{b-c}(x_2) = b-c, \; J_{b-c}(0) = (c-b)/x_2 -c,$$
 $$J_{a-c,b-c}(x_1) = (c-a)/x_1, \; J_{a-c,b-c}(x_2) = (c-b)/x_2, \; J_{a-c,b-c}(0) = (c-a)/x_1 + (c-b)/x_2 -c.$$
 The degrees of polynomials $J_{a-c}, J_{b-c}$, and $J_{a-c,b-c}$ are at most $q-2$.  If one of them takes exactly $q-1$ values, then it is not a PP, and we get a contradiction with Theorem \ref{Wan}.   Therefore each of these polynomials must take  either $q-2$ or $q$ values.

For $t\not\in \{0,x_1\}$, $J_{a-c}(t)= J(t) -c =j(t)-c$. This gives already $q-2$ distinct values of $J_{a-c}$, each different from $a-c$. Hence, $\#V_{J_{a-c}} \geq q-1$. So we must have $\#V_{J_{a-c}} = q$. This is possible if and only if $J_{a-c}(0)= (c-a)/x_1 -c =  b-c$, or equivalently, $(c-a)/x_1 = b$.

Similarly we conclude that $\#V_{J_{b-c}} = \#V_{J_{a-c,b-c}}= q$, which happens if and only if $J_{b-c}(0)= (c-b)/x_2  = a$, and  $J_{a-c,b-c} (0) = (c-a)/x_1 + (c-b)/x_2 -c = 0$. Therefore, the following three equalities must be satisfied  simultaneously: $$(c-a)/x_1  = b, \;\;\;  (c-b)/x_2 = a,\;\;\; (c -a)/x_1 + (c-b)/x_2 - c= 0.$$
From the first and the third  equalities, we obtain $(c-b)/x_2 = c-b$, and from the second and the third equalities, we get $(c-a)/x_1 = c-a$.  As $c$ is distinct from $a$ and from $b$,  we obtain  $x_1=x_2=1$, which contradicts the assumption that $x_1\neq x_2$. Hence, $\#V_j \le q-3$.   This ends the proof of part (i) of Theorem \ref{0001AB}.

%3.  The third statement is an immediate corollary of of the inequality proven in the second part, and the equality  $\#V_j= \#V_J - 1$.
%$\;\;\;\;\end{proof}$
%
%Now we  are ready to finish the proof of Case 4 , and, hence of Theorem \ref{0001AB}.
%For fixed $A,B,b,c\in \F_q$,  $A\neq 0$, $c\neq Ab-B$, the expression
%$P_{A,B} (b,c; a)/(b-a)$,  considered as a rational function of $a$ on $\F_q\setminus \{b\}$, takes at most $q-3$ distinct values by Lemma \ref{lem}.  Therefore, by (\ref{3AB}),
%$$\gamma_3 (A,B,C) \leq q(q-1)(q-3) = q^3 -4q^2 + 3q < q^3 -4q^2 +8q -6 = \gamma_3([0,0,0]).\;\;\;\end{proof} $$
\bigskip

Our proof of part (ii) of  Theorem \ref{0001AB} is very similar to the one of part (i),  and so we will proceed through  it a bit faster.  In order to prove part (ii), due to the translation automorphisms on the third coordinates of vertices of $R$, it is sufficient to consider points $(A,B,0)$ only.   Let $$(A,B,0) \sim [x,*,*] \sim (a, *,*)\sim [y, Ax-ax+ay-B, *]$$  be a path of length 3 from the the point $(A,B,0)$. Hence,  $A\neq a$ and $x\ne y$. Let  $z = Ax-ax+ay-B$. Then  $a= (Ax-B-z))/(x-y)$,
and the condition $A\neq a$ is equivalent to ($z\neq Ay-B$ and   $x\neq y$).  Substituting this expression for $a$ to $[y, Ax-ax+ay-B,*]$, and then setting $x=y+1/t$, $t\neq 0$,  we obtain the following representation of $R^3((A,B,0))$:
\begin{equation}\label{3AB}
R^3((A,B,0)) = \Big\{\Big[ y,z, Q_{A,B}(y,z;t)\Big]:  \;\; y,z,t\in \F, z\neq Ay-B, t\neq 0 \Big\},
\end{equation}
where $Q_{A,B}(y,z;t)$, viewed as a function  of $t$  with the constant addend dropped, is of the form $j(t) = c_3t^3 + c_2t^2+c_1t + c_{-1}/t$, with $$c_3= -y^2(Ay-B-z)^4 \;\;\;\text{and}\;\;\; c_{-1} = A(Ay-B-z)((A^2+A)y+(B-z+1)A+B-z).$$

As $z\neq Ay-B$, we continue our analysis by considering  cases: $A= 0$ and $A\neq 0$.

\bigskip

%{\bf I think a few more comments have to be added to what follows, before the Corollary.  Please check what you see. I can add comments  later.}

$\;\;\;$ \underline{Case 5 (\textit{ii}): $ A=0$}

%Subcases $y=0$ or $y\neq 0$.

$\;\;\;$   For \underline{$y=0$} and $z\neq B$, we arrive to considering   $\# V_{j}$ for $j(t) = z(1-z)t$.
For $z(1-z) \neq 0$ and  $z\neq B$, $j$ takes at most $q-1$ values.  As there are at most $q-3$ such values of $z$, there  at most $q-3$ pais $(0,z)$ for which $j$ takes $q-1$ values.
Their total contribution to $r_3((0,B,0))$ is at most  $(q-1)(q-3)$.

If $z(1-z)=0$, and $z\ne B$, then $z$ can take at most 2  values,  and so each of the corresponding pairs $(0,z)$ contributes at most 1 to $r_3((0,B,0))$.

Therefore, the contribution of pairs $(0,z)$  to $r_3((0,B,0))$ is at most
$$(q-1)(q-3) + 2 = q^2-4q+5.$$

Suppose now  that \underline{$y\neq 0$}. Then we need to estimate $\# V_{j}$, where $j$ is a monic cubic polynomial of $t\in \F^\times$. As  it is not a PP ($q\equiv 1 \pmod{3}$), $\# V_{j}\leq (2q+1)/3$.
As there are at most $q(q-1)$ pairs $(y,z)$, their total contribution in $r_3((0,B,0))$ is at most  at most $q(q+1)(2q+1)/3$,

Therefore,  $$r_3((0,B,0)) \le  q^2-4q+5 +  q(q+1)(2q+1)/3 < q^3-4q^2+8q-6 = r_3([0,0,0]),$$
for any $q\ge 5$,  and part (ii)  of Theorem \ref{0001AB} is proven in this case.
\bigskip

$\;\;\;$ \underline{Case 6 (\textit{ii}): $A\neq 0$}

$\;\;\;$  If \underline{$y=0$},  we arrive to the investigation of $\#V_j$ for $$j(t) = -z(z-1)t + (AB+A+B - (A+1)z)/t.$$ For $z(z-1)\neq 0$, it is reduced to investigating the range of the function of the form $ j(t) = t + \gamma /t$, with $\gamma = (AB+A+B - (A+1)z)/((B+z)z(1-z))$. In this case we proceed as in   Case 3, noting that for   $\gamma \neq 0$, the values of $j$ at $t$ and at $\gamma/t$ are equal. There are at most $q-2$ values of $z$ such that $z+B\neq 0$ and $\gamma\neq 0$. For each of them there are at most $(q-1)/2$ values of $j$. Now, $\gamma = 0$ for at most 1 value of $z$, and for this $z$, $j$ takes at most $q-1$ values.  Hence,  the contribution of all such pairs $(0,z)$ to $r_3(( A,B,0))$  is at most  $(q-2)(q-1)/2 + (q-1)$.

If $z(z-1)=0$, then  $z=0$ (and so $B\neq 0$) or $z=1$ (and so $ B+1\neq 0$). In these cases,  we obtain  that $j(t)= (AB+A+B)/t$  or $(AB+B-1)/t$,  and so it takes either 1 or at most $q-1$ values, depending on the numerators being 0 or not. This contributes to at most  $2+ 2(q-1)$ lines  in $R^3((A,B,0))$.

$\;\;\;$ If \underline{$y\neq 0$}, then dividing $j(t)$ by $c_3$ and dropping the constant term, leads to estimating $\# V_j$ of  the form $j(t) = t^3 + c_2t^2+c_1t + c_{-1}/t$, $t\neq 0,$ with
 $$ c_{-1} = A(A^2y+AB+Ay-Az+A+B-z)/((Ay-B-z)^3y^2).$$
If   $c_{-1} = 0$, then $j$ is a monic cubic polynomial of $t$. If $c_{-1} \neq 0$, we proceed  %if and only if ($A=0$ or $A^2y+AB+Ay-Az+A+B-z= 0$) and (($y\neq 0) and $z\neq Ay-B$).
as we did in  Case 4, and conclude that $\# V_j \leq q-3$. Hence, the contribution of at most $(q-1)^2$ pairs $(y,z)$ into $r_3(( A,B,0))$ is at most $(q-1)^2(q-3)$.

Combining all our findings, we obtain
$$r_3((A,B,0)) \leq  (q-2)(q-1)/2 + (q-1) +   2+ 2(q-1) + (q-1)^2(q-3)  < $$
$$ q^3 -4q^2 +8q -6 = r_3([0,0,0]),$$
for all $q\ge 3$. This ends the proof of  part (ii) of Theorem \ref{0001AB}, and so of the theorem.  \end{proof}

An  immediate corollary of part (ii) of  Theorem \ref{0001AB} is that a line (point) of $R$ cannot be mapped to a point (line) of $R$ by any automorphism of $R$. It will be used in the next section.
\begin{cor} Let $q \equiv 1 \pmod{3}$.  For every  automorphism $\phi$ of $R$, $\phi (P) = P$ and $\phi (L) = L$.
\end{cor}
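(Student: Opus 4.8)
The plan is to derive the corollary directly from Theorem~\ref{0001AB}(ii) and Proposition~\ref{2maxnum}, using only the elementary fact that the cardinality of an $i$-neighborhood is preserved by any automorphism (since automorphisms preserve distances). The graph $R$ is connected --- its diameter is $6$ for $q>3$ --- and bipartite, so it has a unique bipartition, namely $\{P,L\}$. Consequently any automorphism $\phi$ of $R$ permutes the two colour classes as a whole: either $\phi(P)=P$ and $\phi(L)=L$, or $\phi(P)=L$ and $\phi(L)=P$. The entire task is thus to exclude the second alternative.

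Suppose, for contradiction, that $\phi(P)=L$ and $\phi(L)=P$. Then the image $\phi([0,1,0])$ is a \emph{point} of $R$. Since $\phi$ preserves all neighborhood sizes, $r_3(\phi([0,1,0])) = r_3([0,1,0])$, and by Proposition~\ref{2maxnum} this value equals $q^3-4q^2+9q-8$. On the other hand, Theorem~\ref{0001AB}(ii) asserts that every point $v$ of $R$ satisfies $r_3(v) < r_3([0,0,0]) = q^3-4q^2+8q-6$. Because $q^3-4q^2+8q-6 < q^3-4q^2+9q-8$ for every $q\ge 2$, the point $\phi([0,1,0])$ cannot have $3$-neighborhood of size $q^3-4q^2+9q-8$, a contradiction. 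Therefore $\phi(P)=P$ and $\phi(L)=L$.

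Since the hard part --- determining the maximum and second maximum values of $r_3$ among lines, and showing that $r_3$ on every point stays strictly below the second maximum --- is exactly the content of Proposition~\ref{2maxnum} and Theorem~\ref{0001AB}, no genuine obstacle remains for the corollary itself. The only point that deserves an explicit mention is the standard observation that a connected bipartite graph admits a unique bipartition, so that an automorphism has no option other than fixing or interchanging the classes $P$ and $L$ in their entirety; everything else is a one-line comparison of polynomials in $q$.
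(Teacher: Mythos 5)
Your proof is correct and follows essentially the same route the paper intends: the paper derives the corollary from Theorem \ref{0001AB}(ii) via the size of $3$-neighborhoods, implicitly using the unique bipartition of the connected graph $R$ exactly as you spell out. The only cosmetic difference is that you compare $r_3([0,1,0])$ against the bound for points, whereas one can apply part (ii) even more directly to $\phi([0,0,0])$; either comparison closes the argument.
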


\section{Proof of Theorem \ref{main}} \label{sec4}

    Though Theorem 2.1 is stated for $q$ prime, the requirement for $q$ to be prime is only utilized at the end for the conditions of Corollary 4.11 to be met. At the same time, many of the statements we prove to establish Theorem 2.1 are true for odd prime powers $q \equiv 1$(mod 3). Therefore, in all of the following statements we assume that $q$ satisfies these conditions without repeating it each time and $R$ is defined over $\F_q$.
In the following lemma, we collect some observations about the distance between specified vertices.

\begin{lem}\label{Dist6verts}
The following holds in $R$:
\begin{enumerate}[(i)]
    \item Any two vertices of $R$ at distance 2, have distinct first components.

    \item For $b \neq c$, the points $(0, b, r)$ and $(0, c, s)$ in $R$ are at distance 4.

    \item For $r \neq s$, the lines $[x, y, r]$ and $[x, y, s]$(or points $(x, y, r)$ and $(x, y, s)$) are at distance at least 6.

\end{enumerate}

\end{lem}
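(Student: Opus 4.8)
The plan is to prove the three parts in order, with part~(i) serving as the main tool for the lower bounds in parts~(ii) and~(iii). For part~(i), I would argue that this is just a restatement of the absence of $4$-cycles in $R$. Two vertices at distance~$2$ lie in the same part, so both are points or both are lines; say $(p)$ and $(p')$ are points with a common neighbour $[l]$. As noted at the start of the paper, a neighbour of a vertex is determined by that neighbour's first coordinate; dually, given a line $[l]$ and a prescribed first coordinate $p_1$, there is a \emph{unique} point adjacent to $[l]$ with that first coordinate (solve $p_2 = p_1 l_1 - l_2$ from the first defining equation and then $p_3$ from the second). Hence $p_1 = p_1'$ would force $(p) = (p')$, contradicting $d((p),(p')) = 2$; the case of two lines is symmetric.

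For part~(ii), since $(0,b,r)$ and $(0,c,s)$ are distinct points sharing the first coordinate $0$, part~(i) and bipartiteness give $d \ge 4$. It then remains to produce a walk of length $4$ between them. Every line adjacent to $(0,b,r)$ has the form $[l_1,-b,-r]$ and every line adjacent to $(0,c,s)$ has the form $[l_1',-c,-s]$, so I would look for a walk
\[
(0,b,r)\ \sim\ [l_1,-b,-r]\ \sim\ (p_1,p_2,p_3)\ \sim\ [l_1',-c,-s]\ \sim\ (0,c,s)
\]
with $p_1 \ne 0$. The adjacencies force $p_2 = p_1 l_1 + b$ and $l_1' = l_1 + (b-c)/p_1$, and equating the two expressions for $p_3$ obtained from the two outer edges collapses everything to the single scalar equation $p_2(b-c)(p_1+p_2+p_1p_2) = r-s$. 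Since $p_1\in\F^\times$ is free and $p_2 = p_1 l_1 + b$ ranges over all of $\F$ as $l_1$ varies, it suffices to solve this for some $p_1\in\F^\times$, $p_2\in\F$. If $r=s$, take $p_2=0$; if $r\ne s$, then $p_2\notin\{0,-1\}$ and one solves $p_1(1+p_2) = (r-s)/((b-c)p_2) - p_2$ for $p_1$, which is nonzero unless $p_2^2 = (r-s)/(b-c)$. Thus one need only avoid at most four values of $p_2$, which is possible since $q\ge 7$. With $d\ge 4$ this gives $d=4$.

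For part~(iii), part~(i) again gives $d \ge 4$ for the lines $[x,y,r]$ and $[x,y,s]$ (distinct, same first coordinate, distance even and not $2$). Suppose for contradiction $d=4$. The middle vertex of a geodesic of length $4$ is a line $[m]=[m_1,m_2,m_3]$, and there are points $(p)$ adjacent to $[x,y,r]$ and $[m]$, and $(p')$ adjacent to $[m]$ and $[x,y,s]$, with $(p)\ne(p')$ (else $d\le 2$), hence $p_1\ne p_1'$ by part~(i). The first defining equations give $p_2 = p_1 x - y$ and $p_2' = p_1' x - y$; subtracting $p_2 + m_2 = p_1 m_1$ from $p_2' + m_2 = p_1' m_1$ forces $m_1 = x$, and then $m_2 = y$. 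Now the second defining equation of the edge $(p)[m]$ reads $p_3 + m_3 = p_1 p_2 x(p_1+p_2+p_1p_2)$, whose right-hand side equals $p_3 + r$ by the edge $(p)[x,y,r]$, so $m_3 = r$; the symmetric computation with $(p')$ and $[x,y,s]$ gives $m_3 = s$, whence $r = s$, a contradiction. For points $(x,y,r)$ and $(x,y,s)$ one runs the entirely analogous computation: a would-be length-$4$ geodesic has a point as its middle vertex, subtracting two of its defining equations forces that point to be $(x,y,p_3)$, and $p_3$ is then forced to equal both $r$ and $s$.

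I expect the only genuine obstacle to be the bookkeeping in part~(ii): confirming that $p_2(b-c)(p_1+p_2+p_1p_2) = r-s$ always has a solution with $p_1\ne 0$ needs the short case split above together with $q \ge 7$, and (if one prefers an honest path to a walk) one should also check that the five vertices produced are distinct — which they are, since the endpoints differ in a component, the middle point has nonzero first coordinate, and the two inner lines differ in their second coordinate. Parts~(i) and~(iii) are immediate once one notices the right pair of defining equations to subtract.
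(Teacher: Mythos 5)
Your proof is correct and takes essentially the same approach as the paper's: part (i) is the same unique-neighbor observation, part (ii) builds the same length-$4$ path by excluding the same values of the middle point's second coordinate ($0$, $-1$, and the square roots of $(r-s)/(b-c)$), and part (iii) is the same subtraction of the first defining equations along a hypothetical length-$4$ path (you push one step further to $m_3=r=s$ where the paper stops at the second-coordinate contradiction, but the computation is identical).
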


\begin{proof} (\textit{i}) If two vertices are at distance 2, then they are distinct neighbors of another vertex. If their first components are equal, then the definition of the adjacency in $R$ implies that the second and third components are equal, a contradiction.

(\textit{ii}) By part (\textit{i}), the points $(0, b , r)$ and $(0, c, s)$,  where $b \neq c$, cannot be at distance 2. Choose $m$ such that $m \neq 0, -1$ and $m^2 \neq (s-r)/(c-b)$. As $q \geq 5$, such an $m$ is guaranteed to exist. Then it is easy to verify that the following adjacencies define a path of length 4 between the given points:
%\begin{align*}
%(0, b, r) \sim [(m-b)/z, -b, -r] \sim & (z, m, (m-b)m((m+1)z+m) + r) = \\
%& (z, m, (m-c)m((m+1)z+ m) +s) \sim [(m - c)/z, -c, -s] \sim (0, c, s),
%\end{align*}

$$
(0, b, r) \sim [(m-b)/z, -b, -r] \sim  (z, m, (m-b)m((m+1)z+m) + r) =
$$
$$
(z, m, (m-c)m((m+1)z+ m) +s) \sim [(m - c)/z, -c, -s] \sim (0, c, s),
$$
where
$$
z = \left(\frac{s-r}{b-c} - m^2\right)/(m^2 + m)
$$

makes the third component of the middle points equal and guarantees $z \neq 0$.

(\textit{iii}) We will show that these two lines (two points) are not at distance two or distance four from each other. Clearly, they are not at distance 2 by part (\textit{i}).

Suppose the two lines are at distance 4. Then
$$
[x, y, r] \sim (z, zx - y, *) \sim [\alpha, \alpha z - zx +y, * ] = [\alpha, \alpha w - wx +y, * ]\sim (w, wx - y, *) \sim [x, y, s],
$$
with $w \neq z$ and $x \neq \alpha$. Then $\alpha z - zx + y = \alpha w - wx +y$, which is equivalent to
$
(z-w)(\alpha - x) = 0,
$
a contradiction. Thus $[x, y, r]$ and $[x, y, s]$ are at distance at least 6. Since $f_2(p_1, \ell_1) = p_1\ell_1$ is symmetric with respect to $p_1$ and $\ell_1$, then the same exact argument works for points of the same form. This ends the proof. We wish to comment again that in \cite{nassau2020} the diameter of $R$ was shown to be 6 for odd $q \geq 5$.
\end{proof}

Denote $\textit{Aut}(R)$ by $G$, and let $\phi \in G$. To simplify notation, instead of $\phi([x, y, z])$ we will just write $\phi[x, y, z]$, similarly $\phi((x, y, z)) = \phi(x, y, z)$. By Corollary 3.6, $G$ acts on the set of lines and the set of points of $R$, so then we may write,
\begin{align*}
    \phi[x, y, z] = [\lambda_1(x, y, z), \lambda_2(x, y, z), \lambda_3(x, y, z)], \\
    \phi(x, y, z) = (\pi_1(x, y, z), \pi_2(x, y, z), \pi_3(x, y, z)),
\end{align*}
where $\lambda_i$ and $\pi_i$ are component functions of $\phi$. The notation $\lambda_i$ and $\pi_i$ will remind us that they correspond to the action of $\phi$ on lines and on points. From here on, we will assume that $\lambda_i$ and $\pi_i$ implicitly depend on $\phi$.

For a fixed $a \in \F$, the following sets will play an important role in our arguments:
$$
L_{a} = \{ [0, a, r]: r \in \F \} \hs \text{and}  \hs P_{a} = \{ (0, a, r): r \in \F \}.$$

An immediate corollary of Theorem \ref{0001AB} is as follows.
\begin{lem}\label{FelThm}
$G$ acts on $L_0$ and $L_1$.
\end{lem}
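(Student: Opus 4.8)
The plan is to characterize $L_1$ and $L_0$ intrinsically as the sets of vertices realizing, respectively, the largest and the second largest value of the graph invariant $v\mapsto r_3(v)$, and then to invoke the fact that any automorphism preserves this invariant. First I would note that the translation automorphisms $t_b\in G$ act on $L_a$ via $t_b[0,a,r]=[0,a,r-b]$, hence transitively on $L_a$; consequently $r_3$ is constant on each $L_a$, with common value $r_3([0,a,0])$. In particular every line of $L_1$ has $r_3$-value $r_3([0,1,0])$ and every line of $L_0$ has $r_3$-value $r_3([0,0,0])$.

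Next I would combine Proposition~\ref{2maxnum} with Theorem~\ref{0001AB}(i): we have the strict chain $r_3([A,B,C])<r_3([0,0,0])<r_3([0,1,0])$ for every line $[A,B,C]$ with $(A,B)\notin\{(0,0),(0,1)\}$, while $r_3([0,1,0])=q^3-4q^2+9q-8$ and $r_3([0,0,0])=q^3-4q^2+8q-6$ are distinct. Hence, among all lines of $R$, the value $r_3([0,1,0])$ is attained exactly on the lines of $L_1$, and the value $r_3([0,0,0])$ is attained exactly on the lines of $L_0$. By Theorem~\ref{0001AB}(ii), every point $(p)$ of $R$ satisfies $r_3((p))<r_3([0,0,0])$, so adjoining the points to the picture does not disturb it: over the entire vertex set of $R$, $L_1=\{v:r_3(v)=\max_u r_3(u)\}$ and $L_0$ is the set of vertices realizing the second largest value of $r_3$.

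Finally, any $\phi\in G$ is a graph isomorphism, hence distance-preserving, so $r_3(\phi(v))=r_3(v)$ for every vertex $v$; therefore $\phi$ permutes the set of vertices achieving the maximum of $r_3$ and permutes the set achieving the second largest value. By the previous paragraph these sets are precisely $L_1$ and $L_0$, so $\phi(L_1)=L_1$ and $\phi(L_0)=L_0$. Since $\phi$ was arbitrary, $G$ acts on $L_0$ and on $L_1$.

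There is no substantive obstacle here: the work is carried entirely by Proposition~\ref{2maxnum} and Theorem~\ref{0001AB}. The only point that needs a moment's care is the second step — confirming that no point and no line other than those in $L_0$ can tie or exceed $r_3([0,0,0])$, and that no vertex other than those in $L_1$ can tie or exceed $r_3([0,1,0])$ — but this is exactly what the strict inequalities in parts (i) and (ii) of Theorem~\ref{0001AB} guarantee.
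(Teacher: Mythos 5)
Your argument is correct and is precisely the reasoning the paper has in mind: it presents this lemma as an immediate corollary of Theorem~\ref{0001AB}, namely that $L_1$ and $L_0$ are intrinsically characterized as the vertices attaining the largest and second largest values of the automorphism-invariant quantity $r_3$. Your write-up simply makes explicit the role of the translations $t_b$ in showing $r_3$ is constant on each $L_a$, which the paper leaves implicit.
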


%\begin{lem}
%Suppose $\phi \in \Aut(R)$ acts on $L_{a}$ for each $a \in A$. Then there is a permutation $\chi_a$ of $\F$ such $\phi[0, a, r] = [0, a, \chi_a(r)]$. In other words, for any fixed $a \in A$, $\lambda_3(0, a, r) = \chi_a(r)$ for all $r \in \F$.
%\end{lem}

%\begin{lem}
%Let $S \subset \F$ with $0 \in S$. Suppose $\phi$ is an automorphism of $R$ such that for every $a\in S$ and $r \in \F$, $\phi([0, a, r]) = [0, a, \sigma(0, a, r)]$, for all $a \in S$. Then $\sigma(0, a, r) = \chi_S(r)$, is in fact independent of $a$ for all $a \in S$.
%\end{lem}

The goal of the following statements is to ultimately prove that the action of any $\phi \in G$ on any component is determined only by that component. Meaning, that $\lambda_i$ and $\pi_i$ for $1 \leq i \leq 3$, can be reduced to a single variable function. For these new single variable functions, we will use the same notation, that is $\lambda_i(v_1, v_2, v_3) = \lambda_i(v_i)$ and $\pi_i(v_1, v_2, v_3) = \pi_i(v_i)$.

\begin{lem}\label{GactsP0}
$G$ acts on $P_0$.
\end{lem}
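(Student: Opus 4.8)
The plan is to leverage Lemma~\ref{FelThm} (that $G$ acts on $L_0$) together with the structure of neighborhoods in $R$. Since $P_0$ is precisely the set of points all of whose neighbors lie in $L_0$ — indeed, $(0, a, r)$ is adjacent to $[x, y, z]$ iff $a + y = 0 \cdot x = 0$ and $r + z = 0$, so its neighbors are exactly $\{[x, -a, -r] : x \in \F\}$, which lie in $L_0$ only when $a = 0$ — we want to characterize $P_0$ as a distinguished set defined purely in terms of $L_0$, and then invoke the fact that $\phi$ fixes $L_0$ setwise (and, by Corollary~3.6, maps points to points).

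First I would show that $P_0$ is the set of points $v$ with $R^1(v) \subseteq L_0$. One direction is the computation above. For the converse, if a point $(p_1, p_2, p_3)$ has all $q$ of its neighbors in $L_0$, then each neighbor has first component $0$; but distinct neighbors of a vertex have distinct first components by Lemma~\ref{Dist6verts}(i) applied at distance $2$ — more directly, a vertex has exactly one neighbor with a given first component — so a vertex cannot have two neighbors both with first component $0$ unless it has only one neighbor, contradicting that every vertex has degree $q \geq 7$. Hence no point outside $P_0$ can have even two neighbors in $L_0$, let alone all $q$; wait — I need to be careful, since a point could have \emph{one} neighbor in $L_0$. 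So the correct statement is: $(p) \in P_0$ iff $(p)$ has at least two neighbors in $L_0$, equivalently iff $(p)$ has $\geq 2$ neighbors in $L_0$. Let me instead argue that $(p)$ has all its neighbors in $L_0$: the neighbors of $(p_1,p_2,p_3)$ all have first coordinate $p_1 \ell_1 \mapsto$ varying, so its neighbors' first coordinates range over all of $\F$ (as $\ell_1$ ranges over $\F$, using $f = p_1\ell_1$... this needs $p_1 \neq 0$ to be a bijection; if $p_1 = 0$ all neighbors have second coordinate $-p_2$). The cleanest route: the common neighbors of any two distinct lines $[0,0,r], [0,0,s]$ in $L_0$ — by Lemma~\ref{Dist6verts}(iii) these are at distance $\geq 6$, so they have \emph{no} common neighbor; hence I should instead use two lines of the form in $L_0$ that \emph{do} have common neighbors, or characterize $P_0$ via $L_1$ as well.

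Given the lemma ordering, I expect the intended argument is: $P_0 = \bigcap_{[l] \in L_0} (R^2([l]) \cup \{\text{nothing}\})$ fails, so more plausibly one uses that $\phi$ acts on $L_0$ and on $L_1$, picks a line $[1, a, b] \in L_1$ (note $L_1$ has first coordinate $0$... so this is $[0,1,b]$), and observes that $P_0$ is the set of points adjacent to some line in $L_0$ and also satisfying a second-coordinate condition pinned down by adjacency into $L_1$ — specifically, a point $(0, p_2, p_3)$ is adjacent to $[x, -p_2, -p_3]$ for all $x$, and it lies in $P_0$ iff $p_2 = 0$, which is detected by whether its neighbor with first component $0$ lies in $L_0$ versus $L_1$. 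Since $\phi$ fixes $L_0$ and $L_1$ setwise and preserves adjacency, $\phi$ must map the set of points having their $0$-first-component-neighbor in $L_0$ to itself; that set is exactly $P_0$. The main obstacle will be making the ``neighbor with first component $0$'' notion intrinsic and automorphism-invariant — i.e., establishing that $\phi$ respects the partition of points by which $L_a$ contains their distinguished neighbor — and handling the edge case $p_1 = 0$ versus $p_1 \neq 0$ uniformly; I would resolve it by noting $\phi$ permutes $\{L_a : a \in \F\}$ compatibly with its action on $L_0, L_1$, and that a point of the form $(0, p_2, p_3)$ is characterized among all points by having \emph{all} its neighbors in a single $L_a$ (namely $L_{-p_2}$), a property visibly preserved by $\phi$, forcing $\phi(P_0) = P_{a'}$ for some $a'$; then $a' = 0$ follows since $\phi$ fixes $L_0$ and the neighbors of $P_0$ are exactly $L_0$.
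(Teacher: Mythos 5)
There is a genuine gap, and it sits exactly at the crux of the lemma. Your argument hinges on characterizing $P_0$ intrinsically via the sets $L_a$, but the characterizations you propose are false. Recall $L_a=\{[0,a,r]:r\in\F\}$, and that the neighbors of a point $(0,p_2,p_3)$ are the $q$ lines $[x,-p_2,-p_3]$, $x\in\F$. Only the single neighbor with $x=0$ lies in any $L_a$ at all (membership in $L_a$ requires first component $0$), so it is not true that a point of $P_0$ has all (or even two) of its neighbors in $L_0$, nor that a point $(0,p_2,p_3)$ has all its neighbors in $L_{-p_2}$. Likewise, the union of the neighborhoods of the lines in $L_0$ is the set of \emph{all} points with second component $0$ (the neighbors of $[0,0,s]$ are exactly the points $(z,0,-s)$, $z\in\F$), a set of size $q^2$, so ``the neighbors of $P_0$ are exactly $L_0$'' is also false. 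Consequently, knowing that $\phi$ preserves $L_0$ and $L_1$ setwise only tells you that $\phi(0,0,r)$ is some neighbor $(z,0,-s)$ of some line of $L_0$, i.e.\ a point with second component $0$; the entire content of the lemma is to show that $z=0$, and your proposal never produces an argument for that.

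The paper closes this gap with a path argument rather than a set characterization: for each $x\in\F^\times$ there is a $4$-path $[0,1,r]\sim(x,-1,-r)\sim[-x^{-1},0,r-1]\sim(0,0,-r+1)\sim[0,0,r-1]$, so the single point $(0,0,-r+1)$ carries $q-1$ such paths between two lines whose images are already pinned down, as $[0,1,w]$ and $[0,0,s]$, by Lemma~\ref{FelThm}. Writing $\phi(0,0,-r+1)=(z,0,-s)$ and applying $\phi$ to the whole family of paths yields $zy'=x'y'+1$ and $zy'-1+w=s$ as $y'$ ranges over all of $\F^\times$; since $z,w,s$ are fixed, $zy'$ must be constant on $\F^\times$, which forces $z=0$. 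Some argument of this kind --- extracting a constraint from the full family of paths through the point, rather than from a single adjacency or a purported membership characterization --- is what your proof is missing.
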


 \begin{proof}
Let $r \in \F$. For each $x \in \F^\times$, consider the path
\begin{equation}\label{Eq13}
    [0, 1, r] \sim (x, -1, -r) \sim [-x^{-1}, 0, r-1] \sim (0, 0, -r+1) \sim [0, 0, r-1].
\end{equation}

Let $\phi$ be in $G$. By Lemma \ref{FelThm}, we have that $\phi[0, 1, r] = [0, 1, w]$ and $\phi[0, 0, r-1] = [0, 0, s]$ for some $w, s \in \F$. Then applying $\phi$ to (\ref{Eq13}) yields
\begin{equation}\label{Eq14}
    [0, 1, w] \sim (x', -1, -w) \sim [y', x'y'+1, x'y'+w] = [y', zy', s] \sim (z, 0, -s) \sim [0, 0, s],
\end{equation}
where $y' \in \F^\times$.

As $\phi(0, 0, -r+1) = (z, 0, -s)$, then $\phi$ maps $R^1((0, 0, -r+1))$ to $R^1((z, 0, -s))$. As $-x^{-1}$ ranges over $\F^\times$ in (\ref{Eq13}), then $y'$ ranges over $\F^\times$.
Comparing the second and the third components in the equality in (\ref{Eq14}) we obtain that
\begin{equation}\label{zis0}
zy' = x'y'+1 \hs \text{and} \hs  zy'-1+w = s,
\end{equation}
which implies $zy' - 1 + w - s = 0$ for every $y' \in \F^\times$. As $z, w, s$ here are fixed, this implies that $z = 0$ and consequently $s = w-1$, so $\phi[0, 0, r-1] = [0, 0, w-1]$.
\end{proof}

\noindent The following is an immediate corollary of the proof above.

\begin{cor}\label{rtow}
Let $r \in \F$. If $\phi[0, 1, r] = [0, 1, w]$, then $\phi[0, 0, r-1] = [0, 0, w-1]$.
\end{cor}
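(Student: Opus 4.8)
The plan is to extract the implication directly from the computation already carried out in the proof of Lemma~\ref{GactsP0}; indeed, the paths used there were chosen precisely so that this refinement would be visible. First I would recall the setup: for a fixed $r \in \F$ and each $x \in \F^\times$ one has the length-$4$ path
$$[0, 1, r] \sim (x, -1, -r) \sim [-x^{-1}, 0, r-1] \sim (0, 0, -r+1) \sim [0, 0, r-1],$$
all of whose members share the midpoint $(0,0,-r+1)$. Given $\phi \in G$, Lemma~\ref{FelThm} yields $\phi[0,1,r] = [0,1,w]$ and $\phi[0,0,r-1] = [0,0,s]$ for some $w,s \in \F$, and applying $\phi$ to the path above turns it into
$$[0, 1, w] \sim (x', -1, -w) \sim [y', x'y'+1, x'y'+w] = [y', zy', s] \sim (z, 0, -s) \sim [0, 0, s],$$
where $\phi(0,0,-r+1) = (z,0,-s)$ and $y'$ runs over $\F^\times$ as $-x^{-1}$ does.

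Next I would point to the two identities obtained there by matching the second and third coordinates of the equal lines in the middle of the image path, namely $zy' = x'y'+1$ and $zy'-1+w = s$, which hold for every $y' \in \F^\times$. Since $z,w,s$ are independent of $y'$ and $q-1 \ge 2$, the at-most-linear relation $zy' + (w-1-s) = 0$ has more roots than its degree, forcing $z = 0$ and then $s = w-1$. But $s = w-1$ is exactly the assertion of the corollary, namely $\phi[0,0,r-1] = [0,0,w-1]$. Accordingly, I would write the proof as a short paragraph recalling the displayed image path and the identities $zy' = x'y'+1$, $zy'-1+w=s$, and concluding $s = w-1$; alternatively, one could simply reproduce those three lines of the proof of Lemma~\ref{GactsP0} verbatim.

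There is no genuine obstacle here: the only point that needs care is logical rather than computational — one must observe that in the proof of Lemma~\ref{GactsP0} the symbol $w$ denoted not a fixed field element but the value that Lemma~\ref{FelThm} attaches to the given $\phi$ acting on $[0,1,r]$, and likewise the element $r$ there was arbitrary. Hence the implication established in that proof holds for every $\phi \in G$ and every $r \in \F$, which is precisely the quantification in the statement of the corollary.
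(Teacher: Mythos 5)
Your proposal is correct and is essentially identical to the paper's treatment: the paper states the corollary as "an immediate corollary of the proof above," since the final lines of the proof of Lemma \ref{GactsP0} already derive $z=0$ and $s=w-1$ from the identities $zy'=x'y'+1$ and $zy'-1+w=s$ holding for all $y'\in\F^\times$. Your added remark that $r$ and $w$ were arbitrary in that argument, so the implication holds for every $\phi$ and every $r$, is exactly the observation that justifies extracting the corollary.
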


Using the fact that $G$ acts on $P_0$, we now demonstrate that the action of any $\phi \in G$ on the second component of points $(0, a, r) \in P_a$ independent of $r$.

\begin{lem}\label{PaPb}
Let $\phi \in G$. Then for every $a \in \F$, there exists $b \in \F$ such that $\phi(P_a) = P_b$.
\end{lem}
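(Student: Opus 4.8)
The plan is to exploit the structure of the sets $P_a$ together with the distance data already established. By Lemma \ref{GactsP0}, $\phi$ maps $P_0$ to $P_0$. The key observation is that $P_a$ is exactly the fiber of the covering map $\gamma$ (deletion of the third coordinate) over the point $(0,a)$ of $\Gamma_\F(p_1\ell_1)$, so the sets $\{P_a : a \in \F\}$ partition the points whose first coordinate is $0$. I would first show that $\phi$ permutes this collection of fibers. To do this, fix $a$ and take two points $(0,a,r)$ and $(0,a,s)$ with $r \neq s$; by part (iii) of Lemma \ref{Dist6verts} they are at distance at least $6$, while by part (ii) any two points $(0,a,r)$ and $(0,c,s)$ with $a \neq c$ are at distance exactly $4$. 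So membership in a common $P_a$ is detected by the relation ``not at distance $4$'' among points with first coordinate $0$; but ``first coordinate $0$'' is itself not obviously preserved, which is the first thing to pin down.

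To handle that, I would use $P_0$ as an anchor. Since $\phi(P_0)=P_0$, and since for each fixed $a$ every point of $P_a$ is at distance $4$ from every point of $P_0$ (part (ii) of Lemma \ref{Dist6verts}, with $b=0$, $c=a$), the image $\phi(P_a)$ consists of points each at distance $4$ from every point of $P_0$. The next step is therefore to prove the converse: the set of points lying at distance exactly $4$ from \emph{all} of $P_0$ is precisely $\bigcup_{a \neq 0} P_a$ (together, trivially, with the characterization of $P_0$ itself via distance $\geq 6$ internally and distance $\leq 2$ to nothing in $P_0$ — i.e., $P_0$ is the set of points at distance $0$ or $\geq 6$ from every point of $P_0$). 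Granting this, $\phi$ maps $\bigcup_a P_a$ into itself, hence onto itself by finiteness. Within this union, two points lie in the same $P_a$ iff they are at distance $0$ or $\geq 6$ from each other (never $2$ or $4$), by parts (i)–(iii) of Lemma \ref{Dist6verts}; and two points in different $P_a$'s are at distance $4$. Since $\phi$ preserves all distances, it preserves this equivalence relation, so it permutes the blocks $P_a$. Thus for each $a$ there is a unique $b$ with $\phi(P_a) = P_b$, which is the claim.

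The main obstacle I anticipate is the ``converse'' step: showing that no point \emph{outside} $\bigcup_a P_a$ can be at distance $4$ from every point of $P_0$ — equivalently, that an arbitrary point $(u,v,w)$ with $u \neq 0$ fails to reach some $(0,0,t)$ in exactly $4$ steps (for instance, because it is already at distance $2$ from one such point, using part (i) and the adjacency structure, or because the relevant quadratic/linear system forcing a length-$4$ path has no solution for suitable $t$). This is a finite-field solvability computation of the same flavor as the path constructions in Lemma \ref{Dist6verts}, and I would carry it out by writing down a generic length-$4$ walk from $(u,v,w)$ to a line and back to $(0,0,t)$, solving for the intermediate first coordinates, and checking that the resulting constraint on $t$ cannot hold for all $t \in \F$ simultaneously when $u \neq 0$. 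Once that is in hand, everything else is bookkeeping with the distance relations from Lemma \ref{Dist6verts}. An alternative, possibly cleaner route — which I would fall back on if the above case analysis gets unwieldy — is to bypass distances entirely: use that $\phi$ commutes with the covering projection $\gamma$ up to an induced automorphism $\bar\phi$ of $\Gamma_\F(p_1\ell_1)$ (this needs $\phi$ to respect fibers, which is what we are proving, so it is slightly circular), or instead observe directly that $\phi$ maps neighborhoods to neighborhoods and that $P_a$ is characterized as the set of points all of whose neighbors are lines of the form $[x,-a,*]$ — but extracting the value $a$ from that still requires knowing $\phi$ respects the first coordinate, so in the end the distance-based argument above seems the most self-contained, and I would present it.
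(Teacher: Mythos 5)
Your route is genuinely different from the paper's and it does work, but as written it leaves one step unproven --- the step you yourself flag as the main obstacle. The good news is that this step is easier than you fear: you do not need to analyze generic length-$4$ walks. A point $(u,v,w)$ with $u\neq 0$ is a common neighbor, together with $(0,0,t)$, of the line $[v/u,\,0,\,-t]$ precisely when $t=w-v^2(u+v+uv)$; so every point with nonzero first component is at distance exactly $2$ from some point of $P_0$ and hence is \emph{not} at distance $4$ from all of $P_0$. Combined with Lemma \ref{Dist6verts}(i) (which shows no point of $\bigcup_a P_a$ is at distance $2$ from any point of $P_0$) and \ref{Dist6verts}(ii)--(iii), this completes your characterization of $\bigcup_a P_a$ as the points at distance $4$ from every point of $P_0$ together with $P_0$ itself, and the rest of your block-permutation argument is sound: $\phi$ preserves $P_0$ by Lemma \ref{GactsP0}, preserves the point/line bipartition by Corollary 3.6, hence preserves $\bigcup_a P_a$ and the equivalence relation ``distance $0$ or $\ge 6$'' whose classes are the sets $P_a$.

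For comparison, the paper argues quite differently: it reduces to an automorphism $\psi$ stabilizing a line $[0,0,-r]$ (by composing $\phi$ with a translation $t_{r-r'}$), shows $\psi$ fixes the set $F_r=\{(x,0,r):x\in\F^\times\}$, computes the intersection of $2$-neighborhoods $N_r=\bigcap_{v\in F_r}R^2(v)=\{(0,a,r):a\in\F\}$, and only then invokes Lemma \ref{Dist6verts} to show the resulting $b$ is independent of $r$. One thing the paper's proof buys that yours does not: it shows along the way that the third component of $\phi(0,a,r)$ equals the $r'$ determined by $\phi[0,0,-r]=[0,0,-r']$, i.e.\ depends only on $r$ and not on $a$. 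That byproduct is explicitly reused in the proof of Lemma \ref{lampi23} to conclude $\pi_3(0,a,r)=\pi_3(r)$, so if your proof replaced the paper's, a supplementary argument would be needed there. Your approach, in exchange, is more purely metric and reuses Lemma \ref{Dist6verts} instead of introducing the sets $F_r$ and $N_r$.
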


 \begin{proof}
Let $r \in \F$ and $\psi = \psi_r$ belong to the stabilizer of $[0, 0, -r]$ in $G$. Since $G$ acts on $P_0$ and $\psi$ fixes $[0, 0, -r]$, then $\psi$ must fix $(0, 0, r) \in R^1([0, 0, -r])$. This implies that $\psi$ fixes
$$
F_r = \{ (x, 0, r): x \in \F^\times \} = R^1([0, 0, -r])\setminus \{ (0, 0, r) \}.
$$

In order to prove our assertion, we will use the intersection of the 2-neighborhoods of every point in $F_{r}$. Namely,
$$
N_r = \bigcap_{v \in F_{r}} R^2(v).
$$
Let $v = (x, 0, r) \in F_{r}$. Any point in $R^2(v)$ cannot have its first component equal to $x$. As $x$ ranges over $\F^\times$, then $N_r$ can only have points whose first component is 0. This immediately implies that the third component of any point in $N_r$ must be $r$. The second component can take any value in $\F$, since for any $x, a \in \F$, we have a 2-path
$$
(x, 0, r) \sim [-a/x, -a, -r] \sim (0, a, r).
$$
Thus
$$
N_r = \{ (0, a, r) : a \in \F \}.
$$
Since $\psi$ fixes $F_{r}$, then $\psi$ fixes $N_r$. Therefore, for any $a \in \F$, there exists $b \in \F$ such that $\psi(0, a,r)  = (0, b, r)$.

%Therefore, for any $a \in \F$, $\pi_1(0, a, r) = 0$ and $\pi_3(0, a, r) = r$.

Now, let $\phi \in G$ and $r \in \F$. As $\phi$ fixes $L_0$, then $\phi[0, 0, -r] = [0, 0, -r']$. Therefore, $t_{r-r'}\phi[0, 0, -r] = [0, 0, -r]$, i.e. $t_{r -r'}\phi$ stabilizes $[0, 0, -r]$. Thus, from the above we have that for any $a \in \F$, there exists $b \in \F$ such that $t_{r - r'}\phi(0, a, r) = (0, b, r)$. Therefore,
$\phi(0, a, r) = (0, b, r')$. It is conceivable that $b$ may depend on both $a$, and $r$, but the following argument demonstrates this is not the case.

Recall that Lemma \ref{Dist6verts} states that $(0, a, r)$ and $(0, a, s)$ are at distance at least 6 from one another when $r \neq s$. Therefore, their images under $\phi$ must also be at distance at least 6. Suppose that
$$
\phi(0, a, r) = (0, b, r') \hs \text{and} \hs \phi(0, a, s) = (0, c, s').
$$
According to Lemma \ref{Dist6verts}(\textit{ii}), $(0, b, r')$ and $(0, c, s')$ are at distance 4 if $b \neq c$, and so we must have $b = c$. Thus $\phi(P_a) \subset P_b$ and as $|P_a| = |P_b|$, we must have $\phi(P_a) = P_b$.
\end{proof}

\begin{lem}\label{lampi23}
Let $\phi \in G$. Then $\lambda_i$ and $\pi_i$ where $i = 2, 3$ depend only on the $i$th component of a vertex. Furthermore, $\lambda_1(0) = 0$,
$\pi_2(-a) = -\lambda_2(a)$, and  $\pi_3(-r) = -\lambda_3(r)$ for all $a, r \in \F$.
\end{lem}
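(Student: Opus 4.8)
The plan is to show first that a single statement --- call it the \emph{Core Claim}, that every $\phi\in G$ sends each line with first coordinate $0$ to a line with first coordinate $0$, i.e.\ $\lambda_1[0,a,r]=0$ for all $a,r$ --- implies the whole lemma, and then to prove the Core Claim. The reduction rests on one formula. Each line $[l_1,l_2,l_3]$ has $(0,-l_2,-l_3)$ as a neighbour (immediate from $f(0,\ast)=0$ and $g(0,\ast,\ast)=0$), and by Lemma~\ref{PaPb} and its proof $\phi$ acts on $\bigcup_a P_a$ by $\phi(0,b,s)=(0,\beta(b),\sigma(s))$ for some permutations $\beta,\sigma$ of $\F$. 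Since the neighbours of a point $(0,B,S)$ are exactly the lines $[m_1,-B,-S]$, this forces
\begin{equation}
\phi[l_1,l_2,l_3]=\big[\lambda_1[l_1,l_2,l_3],\,-\beta(-l_2),\,-\sigma(-l_3)\big]\tag{$\ast$}
\end{equation}
for every line; in particular $\lambda_2,\lambda_3$ already depend only on the second, respectively third, coordinate. Dually, every point $(p_1,p_2,p_3)$ has the line $[0,-p_2,-p_3]$ as a neighbour, which by $(\ast)$ and the Core Claim is sent to $[0,-\beta(p_2),-\sigma(p_3)]$; as the neighbours of a line $[0,B,S]$ are the points $(q_1,-B,-S)$, we get $\phi(p_1,p_2,p_3)=(q_1,\beta(p_2),\sigma(p_3))$, so $\pi_2,\pi_3$ depend only on the matching coordinate. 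Comparing $\pi_2(x)=\beta(x)$ with $\lambda_2(x)=-\beta(-x)$ gives $\pi_2(-a)=-\lambda_2(a)$, and likewise $\pi_3(-r)=-\lambda_3(r)$; and $\lambda_1(0)=0$ is the Core Claim itself.

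To prove the Core Claim I would localize to the second-coordinate-$0$ part of $R$. I would first note that from $\phi(L_0)=L_0$ (Lemma~\ref{FelThm}) and $(\ast)$ one gets $\lambda_1[0,0,r]=0$ for all $r$ and $\beta(0)=0$; then $(\ast)$ shows $\phi$ maps any line $[l_1,0,l_3]$ to a line of second coordinate $-\beta(0)=0$, and using $\lambda_1[0,0,\ast]=0$ one checks the same for points (the unique first-coordinate-$0$ neighbour of $(p_1,0,p_3)$ is $[0,0,-p_3]$, which goes to $[0,0,-\sigma(p_3)]$, and this forces the image of $(p_1,0,p_3)$ to have second coordinate $0$). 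Hence $\phi$ maps the finite vertex set of the induced subgraph $R'$ of $R$ on all vertices with second coordinate $0$ bijectively to itself, so $\phi$ restricts to an automorphism of $R'$. The point is that $R'$ is very rigid: its connected components are indexed by $c\in\F$, the $c$-th consisting of the $q$ lines $[l_1,0,-c]$ and the $q$ points $(p_1,0,c)$, and in each component only the two hubs $(0,0,c)$ and $[0,0,-c]$ have degree $q$ (this is where $q\ge 7$ matters), everything else having degree $1$. Since $\phi$ permutes components, preserves degrees, and sends lines to lines, it maps a degree-$1$ line $[l_1,0,-c]$ (so $l_1\ne0$) to a degree-$1$ line $[l_1',0,-c']$ (so $l_1'\ne0$); thus $\phi$ preserves the set of lines with second coordinate $0$ and first coordinate $\ne0$.

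It then remains to treat lines with both first and second coordinates nonzero. Such a line $[l_1,l_2,l_3]$ is a neighbour of the second-coordinate-$0$ point $v=(l_2/l_1,0,-l_3)$, and $\phi(v)$ again has second coordinate $0$ and, moreover, nonzero first coordinate (otherwise $\phi(v)\in P_0$, contradicting $\phi(P_0)=P_0$ (Lemma~\ref{GactsP0}) and injectivity). Now $\phi$ identifies the neighbourhood of $v$ with that of $\phi(v)$; each of these two points has a single first-coordinate-$0$ neighbour, namely of the shape $[0,0,\ast]$, and these correspond to each other, so the remaining $q-1$ neighbours of $v$ --- which include our line --- go to the remaining $q-1$ neighbours of $\phi(v)$, all with first coordinate $\ne0$. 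Combining the two cases, $\phi$ sends every line with nonzero first coordinate to such a line, and since $\phi$ is a bijection of the finite set of lines it must send every line with first coordinate $0$ to a line with first coordinate $0$, which is the Core Claim. I expect the main work to be organizational: establishing $(\ast)$ cleanly and verifying that the second-coordinate-$0$ vertices form a $\phi$-invariant induced subgraph; once the problem is reduced to $R'$, its star-like structure yields the needed rigidity almost at once.
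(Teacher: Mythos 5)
Your proposal is correct, and the reduction at its heart is the same as the paper's: from the proof of Lemma \ref{PaPb} one knows $\phi(0,b,s)=(0,\beta(b),\sigma(s))$ with $\beta,\sigma$ single-variable, and since every line $[l_1,l_2,l_3]$ is a neighbour of $(0,-l_2,-l_3)$ this pins down $\lambda_2,\lambda_3$; dually, once $\lambda_1$ is known to vanish on lines with first coordinate $0$, the neighbour $[0,-p_2,-p_3]$ of an arbitrary point pins down $\pi_2,\pi_3$ and yields the sign relations. Where you genuinely diverge is in the proof of your Core Claim $\lambda_1(0,a,r)=0$. The paper dispatches this with a single distance observation: $[0,0,r]$ and $[0,a,r]$ (for $a\neq 0$) are at distance at least $4$ by Lemma \ref{Dist6verts}, whereas if $\lambda_1(0,a,r)=x'\neq 0$ the images $[0,0,\lambda_3(r)]$ and $[x',\lambda_2(a),\lambda_3(r)]$ would be joined by the $2$-path through $(\lambda_2(a)/x',\,0,\,-\lambda_3(r))$ --- a contradiction with $\phi$ preserving distances. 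Your route instead shows that the set of vertices with second coordinate $0$ is $\phi$-invariant, observes that the induced subgraph $R'$ is a disjoint union of double stars whose hubs $(0,0,c)$ and $[0,0,-c]$ are the only vertices of degree $q>1$, uses degree preservation in $R'$ to handle lines with $l_2=0$, a separate neighbourhood-matching step (through the point $(l_2/l_1,0,-l_3)$ and Lemma \ref{GactsP0}) for lines with $l_1,l_2$ both nonzero, and finally complementation to conclude. I checked the details and they hold; both arguments are valid. The paper's is shorter because the needed distance facts were already banked in Lemma \ref{Dist6verts}, while yours buys independence from that lemma (using only Lemmas \ref{FelThm}, \ref{GactsP0}, \ref{PaPb}) at the cost of a longer case analysis; the invariant-subgraph $R'$ you isolate is a nice structural object that the paper never makes explicit.
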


 \begin{proof}
The logic of the proof is as follows. First we show that $\pi_3(0, a, r) = \pi_3(r)$ for all $a, r \in \F$. Then we use this fact to demonstrate our assertion for $\lambda_2$ and $\lambda_3$. This will imply that $\lambda_1(0, a, r) = \lambda_1(0) = 0$ for all $a, r \in \F$, which in turn will allow us to prove the assertion for $\pi_2$ and $\pi_3$.

In the proof of Lemma \ref{PaPb} we demonstrated that if $\phi[0, 0, -r] = [0, 0, -r']$ (which is true by Lemma \ref{FelThm}), then for every $a \in \F$, there exists $b \in \F$ such that $\phi(0, a, r) = (0, b, r')$. Note that here $r'$ does not depend on $a$, which demonstrates that $\pi_3(0, a, r) = r' = \pi_3(r)$, for any $a \in \F$.

By Lemma \ref{PaPb}, $\pi_2(0, a, r) = \pi_2(a)$, and by the paragraph above $\pi_3(0, a, r) = \pi_3(r)$. Since $[x, -a, -r] \sim (0, a, r)$, then
$ \phi[x, -a, -r] \sim \phi(0, a, r) = (0, \pi_2(a), \pi_3(r)).$
This adjacency implies that for any $x, a, r \in \F$, $\lambda_2(x, -a, -r) = -\pi_2(a)$ and $\lambda_3(x, -a, -r) = -\pi_3(r)$. This proves the assertion about $\lambda_2$ and $\lambda_3$. Hence, from here on we can write $\lambda_2(x, a, r)$ as $\lambda_2(a)$ and $\lambda_3(x, a, r)$  as $\lambda_3(r)$.

%The same can be proven for $\pi_2$ and $\pi_3$ in the exact same manner if we can show that $\lambda_1(0, a, r) = 0$, because that would imply $\phi[0, a, r] = [0, \lambda_2(a), \lambda_3(r)]$.
Now we show that $\lambda_1(0, a, r) = 0$.  By Lemma \ref{Dist6verts}(\textit{ii}) We know that $[0, 0, r]$ and $[0, a, r]$ for $a \neq 0$ are at distance 4. Since $G$  acts on $L_0$ by Lemma \ref{FelThm}, then $\phi[0, 0, r] = [0, 0, \lambda_3(r)]$. Hence, $\phi[0, a, r] = [\lambda_1(0, a, r), \lambda_2(a), \lambda_3(r)]$ must be at distance 4 from $[0, 0, \lambda_3(r)]$. If $\lambda_1(0, a, r) = x' \neq 0$, then the following path shows that $\phi[0, 0, r]$ and $\phi[0, a, r]$ are at distance 2:
$$
[0, 0, \lambda_3(r)] \sim (\lambda_2(a)/x', 0, -\lambda_3(r)) \sim [x', \lambda_2(a), \lambda_3(r)].
$$
Thus $\lambda_1(0, a, r) = x' = 0$.

We now have that $\phi[0, a, r] = [0, \lambda_2(a), \lambda_3(r)]$ for any $a, r  \in \F$. As  $(x, -a, -r) \sim [0, a, r]$ for any $x \in \F$, $\phi(x, -a, -r) \sim \phi[0, a, r] = [0, \lambda_2(a), \lambda_3(r)]$. Thus by the adjacency relations,  $\pi_2(x, -a, -r) = -\lambda_2(a)$ and $\pi_3(x, -a, -r)  = -\lambda_3(r)$. Hence, from here on we can write $\pi_2(x, a, r)$ as $\pi_2(a)$ and $\pi_3(x, a, r)$  as $\pi_3(r)$. Furthermore, we obtain
\begin{equation}\label{Eq25}
\pi_3(-a) = -\lambda_3(a) \hs \text{and} \hs \pi_3(-r) = -\lambda_3(r).
\end{equation}
\end{proof}

We now show some consequences of these results. In particular, we will demonstrate that $\lambda_2(a) = a = \pi_2(a)$ for all $a \in \F_p \subset \F$.

%and furthermore we show that $\phi$ can always be composed with some $t_b$ so that the third component function of $t_b\phi$ also fixes the third component of lines whose third component is from $\F_p \subset \F$.

\begin{lem}\label{Fix3rdComp}
Let $\phi \in G$. Suppose there exists an $r \in \F$ such that $\lambda_3(r) = r$. Then $\lambda_3(r - k) = r-k$ for all $k \in \F_p$.
\end{lem}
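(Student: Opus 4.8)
The plan is to distill Corollary~\ref{rtow} into a one-step recursion for $\lambda_3$ and then iterate it. First I would record the normalizations coming from the earlier results: by Lemma~\ref{FelThm} the group $G$ preserves both $L_0$ and $L_1$, so for every $s\in\F$ the line $\phi[0,1,s]$ lies in $L_1$ and $\phi[0,0,s]$ lies in $L_0$. Combining this with Lemma~\ref{lampi23}, which allows us to write $\phi[0,a,s]=[0,\lambda_2(a),\lambda_3(s)]$, forces $\lambda_2(1)=1$ and $\lambda_2(0)=0$; hence $\phi[0,1,s]=[0,1,\lambda_3(s)]$ and $\phi[0,0,s]=[0,0,\lambda_3(s)]$ for all $s\in\F$.

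Next I would invoke Corollary~\ref{rtow} with its ``$r$'' taken to be an arbitrary $s\in\F$ and its ``$w$'' equal to $\lambda_3(s)$: since $\phi[0,1,s]=[0,1,\lambda_3(s)]$, the corollary gives $\phi[0,0,s-1]=[0,0,\lambda_3(s)-1]$. On the other hand $\phi[0,0,s-1]=[0,0,\lambda_3(s-1)]$ by the normalization above, so comparing third components yields the functional equation
$$\lambda_3(s-1)=\lambda_3(s)-1\qquad\text{for every }s\in\F.$$

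Finally I would iterate: a trivial induction on the integer $k\ge 0$ gives $\lambda_3(s-k)=\lambda_3(s)-k$, and since $\F$ has characteristic $p$, as $k$ runs through $0,1,\dots,p-1$ the values $s-k$ run exactly through $s-k$ with $k\in\F_p$. Specializing $s=r$ and using the hypothesis $\lambda_3(r)=r$ gives $\lambda_3(r-k)=r-k$ for all $k\in\F_p$, which is the claim. I do not anticipate a real obstacle here; the substantive content is already packaged in Corollary~\ref{rtow}, and the only point requiring a moment's care is the bookkeeping that $\phi$ sends lines of the shapes $[0,0,*]$ and $[0,1,*]$ to lines of the same shape, which is precisely the $G$-invariance of $L_0$ and $L_1$ established earlier.
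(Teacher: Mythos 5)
Your proof is correct and takes essentially the same route as the paper's: both rest on applying Corollary~\ref{rtow} together with the fact from Lemma~\ref{lampi23} that $\lambda_3$ depends only on the third component, and then iterating $p$ times. The only cosmetic difference is that you first extract the unconditional recursion $\lambda_3(s-1)=\lambda_3(s)-1$ and then specialize to $s=r$, whereas the paper applies the corollary step by step at the already-fixed values.
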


 \begin{proof}
Corollary \ref{rtow} states that if $\phi[0, 1, r] = [0, 1, r]$, then $\phi[0, 0, r-1] = [0, 0, r-1]$. But since $\lambda_3$ is only dependent on $r$, this implies that if $\lambda_3(r) = r$, then $\lambda_3(r-1) = r-1$. Therefore, by applying this iteratively, we obtain that $\lambda_3(r-k) = r-k$ for any $k \in \F_p$.
\end{proof}

\begin{lem}\label{Fix2ndComp}
Let $\phi \in G$. Then, $\lambda_2(a) = \pi_2(a) = a$ for any $a \in \F_p \subset \F$.
\end{lem}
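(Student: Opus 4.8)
The plan is to normalize $\phi$ by a translation, then to apply $\phi$ to two explicit length-$4$ walks (one joining $L_a$ to $L_0$, the other joining $L_a$ to $L_1$), and to extract from the two defining equations of $R$ a scalar identity that collapses to $\lambda_2(a)=a$.

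First I would record two preliminaries. Replacing $\phi$ by $t_{\lambda_3(0)}\phi$ changes neither $\lambda_2$ nor $\pi_2$, since translations act trivially on the second coordinate; so we may assume $\lambda_3(0)=0$, and then Lemma~\ref{Fix3rdComp} (with $r=0$) gives $\lambda_3(j)=j$ for all $j\in\F_p$. Also $\lambda_2$ is a bijection of $\F$: since $\lambda_2$ depends only on the second coordinate (Lemma~\ref{lampi23}), $\phi$ permutes the blocks of lines with a fixed second coordinate, so two equal values would force distinct blocks of equal size to have the same image. Since $G$ acts on $L_0$ and $L_1$ (Lemma~\ref{FelThm}) we already have $\lambda_2(0)=0$ and $\lambda_2(1)=1$; it remains to prove $\lambda_2(a)=a$ for $a\in\F_p\setminus\{0,1\}$ (so $p\ge3$), after which $\pi_2(a)=-\lambda_2(-a)=a$ on $\F_p$ follows from Lemma~\ref{lampi23}.

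Fix such an $a$, fix $r\in\F_p$, and choose $x\in\F_p^{\times}$ with $E:=x(1-a)-a\ne0$ (this rules out one value of $x$, so such $x$ exists as $p\ge3$). For $a'\in\{0,1\}$ one verifies directly that
$$[0,a,r]\sim(x,-a,-r)\sim\bigl[(a'-a)/x,\;a',\;r'\bigr]\sim(0,-a',-r')\sim[0,a',r'],\qquad r'=r+a(a-a')E,$$
is a walk in $R$ and that $r'\in\F_p$. I would then apply $\phi$ to it, using $\phi(L_0)=L_0$, $\phi(L_1)=L_1$, $\phi(P_c)=P_{\pi_2(c)}$ with $\pi_2(-c)=-\lambda_2(c)$ (Lemmas~\ref{PaPb} and~\ref{lampi23}), $\pi_3(-t)=-\lambda_3(t)$, $\lambda_3|_{\F_p}=\mathrm{id}$, and $\lambda_1(0,\cdot,\cdot)=0$. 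These pin down every coordinate of the five image vertices except the first coordinates $x':=\pi_1(x,-a,-r)$ — which depends on $x,a,r$ but, crucially, \emph{not} on $a'$ — and $M_{a'}:=\lambda_1((a'-a)/x,a',r')$; writing $L:=\lambda_2(a)$, the image walk is
$$[0,L,r]\sim(x',-L,-r)\sim\bigl[M_{a'},\;\lambda_2(a'),\;r'\bigr]\sim(0,-\lambda_2(a'),-r')\sim[0,\lambda_2(a'),r'].$$

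The only edge that yields relations is $(x',-L,-r)\sim[M_{a'},\lambda_2(a'),r']$, whose two adjacency conditions read $x'M_{a'}=\lambda_2(a')-L$ and $r'-r=-x'LM_{a'}\bigl(x'(1-L)-L\bigr)$. One checks $x'\ne0$ (else $\phi$ would send a point not in $P_{-a}$ into $P_{-L}=\phi(P_{-a})$), so I can eliminate $M_{a'}$ and, using $r'-r=a(a-a')E$, solve for $x'$:
$$x'=\frac{L^2\bigl(\lambda_2(a')-L\bigr)-a(a-a')E}{L(1-L)\bigl(\lambda_2(a')-L\bigr)};$$
the denominator is nonzero because $L\ne0$ ($a\ne0$), $L\ne1$ ($a\ne1$ and $\lambda_2$ injective), and $\lambda_2(a')\ne L$ ($a'\ne a$ and $\lambda_2$ injective). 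Since the left side is independent of $a'$, setting the $a'=0$ expression equal to the $a'=1$ expression and substituting $\lambda_2(0)=0$, $\lambda_2(1)=1$, the identity simplifies to $aE(a-L)=0$, hence $L=a$, i.e.\ $\lambda_2(a)=a$. I expect the only genuine difficulty to be bookkeeping: verifying the walk, then carefully invoking the earlier lemmas so that exactly $x'$ and $M_{a'}$ remain undetermined, and confirming that none of $a$, $E$, $L$, $1-L$, $\lambda_2(a')-L$, $x'$ vanishes so that each manipulation is legitimate.
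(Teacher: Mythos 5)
Your proof is correct, but it takes a genuinely different route from the paper's. The paper begins with the same normalization ($\lambda_3(0)=0$, hence by Lemma \ref{Fix3rdComp} $\phi$ fixes $[0,1,0]$ and $[0,0,b]$ for every $b\in\F_p$), but its key step is set-theoretic rather than computational: it computes the intersection of second neighborhoods $I_b=R^2[0,1,0]\cap R^2[0,0,b]=\{[x,b+1,b]:x\in\F^\times\}$, observes that $\phi(I_b)=I_b$, and reads off $\lambda_2(b+1)=b+1$ directly from Lemma \ref{lampi23}; it then gets $\pi_2$ by noting that $\phi$ fixes the point $(0,-(b+1),-b)$, whose neighborhood is $I_b\cup\{[0,b+1,b]\}$. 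You instead push two explicit length-$4$ walks (from $[0,a,r]$ to $L_0$ and to $L_1$) through $\phi$ and equate the two resulting expressions for the common quantity $x'=\pi_1(x,-a,-r)$. I verified your walks against the adjacency relations, the claim that only the middle edge of the image walk is informative, the elimination leading to $aE(a-L)=0$, and the auxiliary facts you rely on (injectivity of $\lambda_2$, $x'\ne 0$ via $\phi(P_{-a})=P_{-L}$, and the nonvanishing of $L$, $1-L$, $\lambda_2(a')-L$, $E$); all check out. The trade-off is clear: the paper's intersection argument is shorter and essentially computation-free, while yours costs more bookkeeping and a few extra nonvanishing checks but uses nothing beyond the defining equations and well-chosen walks --- in spirit it is closer to the technique the paper deploys later in Lemma \ref{3rdComp}.
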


 \begin{proof}
The entire proof rests on observing that for a fixed $b \in \F_p$, the lines $[0, 0, b]$ and $[0, 1, 0]$ have a special intersection of their 2-neighborhoods, namely, $\{ [x, b+1, b] : x \in \F^\times \}$.

Without loss of generality, we will assume that $\pi_3(0) = 0$. If not, we may consider an alternate automorphism by composing $\phi$ with $t_m$ for some $m \in \F$. Since $t_m$ has no effect on the second component, then the claimed result holds for $\phi$ if and only if it holds for $t_m\phi$.

By Lemma \ref{Fix3rdComp}, $\phi$ fixes $[0, 1, 0]$ and $[0, 0, b]$ since $b \in \F_p$. Note that
$$
R^2[0, 0, b] = \{ [x, xy, b]: x, y \in \F, x \neq 0 \}
\hspace{0.3cm} \text{and} \hspace{0.3cm}
R^2[0, 1, 0] = \{ [x, xz+1, xz] : x, z \in \F, x \neq 0\}.
$$
Let $I_b = R^2 [0, 1, 0]  \cap R^2 [0, 0, b]$. For any line in $I_b$ with first component $x$, we must have $xz + 1 = xy$
and $xz = b$. Therefore
$$
I_b = \{ [x, b+1, b] : x \in \F^\times \}.
$$
Since $\phi$ fixes both $[0, 0, b]$ and $[0, 1, 0]$, then  $\phi(I_b) = I_b$. Thus $\lambda_2(b+1) = b+1$ by Lemma \ref{lampi23}. Consequently, $\phi$ fixes $(0, -(b+1), -b)$ because $\phi$ fixes its neighborhood
$$
R^1(0, -(b+1), -b) = I_b \cup \{ [0, b+1, b] \}.
$$
Thus, by Lemma \ref{lampi23} and the above, we have $\pi_2(-b-1) = -b-1 = -\lambda_2(b+1)$. As $b$ was an arbitrary element of $\F_p \subset \F$, we obtain the claimed result.
\end{proof}

\begin{lem}\label{3rdComp}
Let $\phi \in G$ such that $\lambda_2(a) = a$ for all $a \in \F$. Then $\pi_2(a) = a$ for all $a \in \F$, and there exists $b \in \F$ such that $\lambda_3(r) = r+b = \pi_3(r)$ for all $r \in \F$.
\end{lem}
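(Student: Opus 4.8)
The equality $\pi_2(a)=a$ for all $a\in\F$ is immediate: Lemma~\ref{lampi23} gives $\pi_2(-a)=-\lambda_2(a)$, and the hypothesis gives $\lambda_2(a)=a$, so $\pi_2(-a)=-a$ for every $a$. The same lemma gives $\pi_3(r)=-\lambda_3(-r)$, so it suffices to show that $\lambda_3$ is a translation; then $b:=\lambda_3(0)$ works, $\lambda_3(r)=r+b$ and $\pi_3(r)=-\lambda_3(-r)=r-b$. The plan is to first prove that $\lambda_3$ is an affine map $r\mapsto\sigma r+\tau$, and then to force the slope $\sigma$ to be $1$.

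\emph{Step 1: $\lambda_3$ is affine.} Fix $c\in\F\setminus\{0,-1/2\}$ (possible since $q\ge 5$) and $d\in\F$, and look at the point $(1,c,d)$. Its neighbours are exactly the lines $[\ell_1,\ \ell_1-c,\ c(1+2c)\ell_1-d]$, $\ell_1\in\F$, and since $c(1+2c)\neq 0$ the third coordinate $\ell_3=c(1+2c)\ell_1-d$ runs over all of $\F$ as $\ell_1$ does. Put $u:=\pi_1(1,c,d)$. Applying $\phi$ and using $\lambda_2=\pi_2=\mathrm{id}$, the first adjacency equation of the image edge reads $c+(\ell_1-c)=f\big(u,\lambda_1(\cdot)\big)=u\,\lambda_1(\cdot)$, which pins $u\,\lambda_1(\cdot)=\ell_1$; feeding this, together with the original relation $d+\ell_3=g(1,c,\ell_1)=c(1+2c)\ell_1$, into the second adjacency equation $\pi_3(d)+\lambda_3(\ell_3)=g\big(u,c,\lambda_1(\cdot)\big)$ eliminates all unknown first-coordinate data and, after routine simplification, yields
$$\lambda_3(\ell_3)=\frac{u+c+uc}{1+2c}\,(\ell_3+d)-\pi_3(d)\qquad\text{for all }\ell_3\in\F.$$
Hence $\lambda_3(r)=\sigma r+\tau$ with $\sigma=\dfrac{u+c+uc}{1+2c}$ and $\tau=\sigma d-\pi_3(d)$.

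\emph{Step 2: $\sigma=1$.} For $t\in\F$ consider the line $[1,0,t]$; its neighbours are the points $(s,\,s,\,s^3(2+s)-t)$, $s\in\F$. Since $\phi$ permutes the lines and carries $L_0$ onto $L_0$ (Lemma~\ref{FelThm}), it cannot send $[1,0,t]\notin L_0$ into $L_0$, so $\mu:=\lambda_1(1,0,t)\neq 0$. Applying $\phi$, the first adjacency equation forces $\pi_1$ of the neighbour point to equal $s/\mu$, and the second adjacency equation, after substituting this and $\lambda_3(r)=\sigma r+\tau$, $\pi_3(r)=-\lambda_3(-r)=\sigma r-\tau$, collapses to the identity
$$\sigma\,s^3(2+s)=\frac{s^3(1+\mu+s)}{\mu}\qquad\text{for all }s\in\F.$$
Clearing denominators gives $\mu\sigma(s^4+2s^3)=s^4+(1+\mu)s^3$ as an identity of polynomials of degree at most $4$ over $\F$; as $q\ge 5$, comparing the coefficients of $s^4$ and of $s^3$ gives $\mu\sigma=1$ and $2\mu\sigma=1+\mu$, hence $\mu=1$ and $\sigma=1$. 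Therefore $\lambda_3(r)=r+b$ with $b:=\tau=\lambda_3(0)$, and $\pi_3(r)=-\lambda_3(-r)=r-b$, which together with $\pi_2=\mathrm{id}$ is the assertion of the lemma.

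\emph{Main obstacle.} Both steps hinge on selecting a vertex whose entire neighbourhood in $R$ can be parametrized so that the free parameter appears as the (automorphism-invariant) second coordinate of the neighbours; then after applying $\phi$ the first adjacency equation determines the unknown first-coordinate functions exactly on the vertices in play, and the second adjacency equation becomes an identity in that parameter. The delicate point is to find a line — here $[1,0,t]$ — whose neighbourhood curve is ``flat'' enough ($p_2=p_1$, the third coordinate inactive) that the resulting identity isolates the slope $\sigma$ rather than merely relating it to further unknowns; checking that the coefficient comparison is legitimate (all degrees $<q$) is where the hypothesis $q\ge 5$ is used.
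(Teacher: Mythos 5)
Your proof is correct, but it takes a genuinely different route from the paper's. The paper first normalizes $\phi$ by a translation so that $\lambda_3(0)=0$, invokes Lemma \ref{Fix3rdComp} (which itself rests on Corollary \ref{rtow}) to conclude $\lambda_3(r)=r$ on the prime subfield $\F_p$, and then uses a specific path of length $4$ from $[0,0,0]$ to $[0,1,r]$ to extend the identity to all $r\in\F$. You bypass Lemma \ref{Fix3rdComp} and Corollary \ref{rtow} entirely: parametrizing the full neighborhood of the point $(1,c,d)$ shows directly that $\lambda_3$ is affine, and parametrizing the neighborhood of the line $[1,0,t]$ forces the slope to be $1$ via a coefficient comparison of degree-$4$ polynomials (legitimate since $q\ge 7$ here). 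I checked the adjacency computations --- $\ell_2=\ell_1-c$, $\ell_3=c(1+2c)\ell_1-d$, the elimination $u\lambda_1(\cdot)=\ell_1$, the neighbours $(s,s,s^3(2+s)-t)$ of $[1,0,t]$, the nonvanishing of $\mu$ via Lemma \ref{FelThm}, and the final identity $\mu\sigma(s^4+2s^3)=s^4+(1+\mu)s^3$ --- and they are all right. Your argument is more self-contained and works uniformly over $\F_q$ without the prime-subfield intermediate step; the paper's argument is shorter given that Lemma \ref{Fix3rdComp} has already been established for use in Lemma \ref{Fix2ndComp}. One remark: you obtain $\pi_3(r)=r-b$ while the lemma as printed asserts $\pi_3(r)=r+b=\lambda_3(r)$; the printed statement is actually inconsistent with the relation $\pi_3(-r)=-\lambda_3(r)$ of Lemma \ref{lampi23} unless $b=0$ (and the paper's own proof likewise ends with opposite signs for $\lambda_3$ and $\pi_3$), so this is a sign typo in the statement, not a defect in your argument.
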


 \begin{proof}
%If $\lambda_2(a) = a$, then $\pi_2(a) = -\lambda_2(-a) = -(-a) = a$ by (\ref{Eq25}). If $\lambda_3(0) = 0$, then $\phi$ fixes $[0, 0, 0]$. We wish to demonstrate that as a result, $\phi$ fixes $[0, 1, r]$ for every $r \in \F$. Consider the following path:
%\begin{equation}\label{Eq10}
%[0, 0, 0] \sim (x, 0, 0) \sim [y, xy, 0] = [y, zy+1, zy+r] \sim (z, -1, r) \sim [0,1, r],
%\end{equation}
%where $y \neq 0$ and $z \neq x$. Thus we have that $zy + r = 0$ so that $z = -r/y$. Therefore, $[y, zy + 1, zy+r] = [y, -r+1, 0] = [y, xy, 0]$.
If $\lambda_2(a) = a$ for all $a \in \F$, then $\pi_2(a) = -\lambda_2(-a) = -(-a) = a$ by (\ref{Eq25}). If $\lambda_3(0) = b$, then consider $\phi' = t_{-b}\phi$, so that $\lambda_3'(0) = 0$. By Lemma \ref{Fix3rdComp}, we have that $\lambda_3'(r) = r$ for all $r \in \F_p$. Then for $x \neq 0$ and $r \not\in \F_p$ (so $r \neq 1$),
consider the path:
\begin{equation}\label{Eq10}
[0, 0, 0] \sim (x, 0, 0) \sim [(-r+1)/x, -r+1, 0] \sim (xr/(r-1), -1, -r) \sim [0,1, r].
\end{equation}

Keeping in mind that $\lambda_2'$ is the identity map (as $t_{-b}$ does not affect the first two components) and $\lambda_3'(0) = 0$, we apply $\phi'$ to (\ref{Eq10}). Then $\phi'[0, 0, 0] = [0, 0, \lambda_3'(0)] = [0, 0, 0]$ and $\phi'[0, 1, r] = [0, 1, s]$ by Lemma \ref{FelThm}. Hence, the image of the path in (\ref{Eq10}) is:
$$
[0, 0, 0] \sim (x', 0, 0) \sim [y', -r+1, 0] \sim (z', -1, -s) \sim [0, 1, s],
$$
where the existence of $x', y', z'$ is guaranteed as $\phi'$ is an automorphism. Hence,
$$
(- r+ 1) - 1 = y' z' \hs \text{and} \hs y' z'  = -s.
$$
This implies $r= s$. Thus $\phi'[0, 1, r] = [0, 1, r]$ for every $r \in \F$. Therefore $\lambda_3'(r) = r$ for every $r \in \F$. If $\lambda_3'(r) = r$ for all $r \in \F$, then $\pi_3(r) = r$ for all $r \in \F$ again by (\ref{Eq25}). Since $\phi = t_b\phi'$, then $\lambda_3(r) =  \lambda_3'(r) + b = r-b$ and $\pi_3(r) = \pi_3'(r) - b = r+b$.
\end{proof}

\begin{lem}
Let $\phi \in G$ such that $\lambda_2(a) = a$ for all $a \in \F$. Then $\lambda_1(x) = x = \pi_1(x)$ for all $x \in \F$.
\end{lem}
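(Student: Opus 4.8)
The plan is to normalize $\phi$ so that it fixes the second and third component of every vertex, and then to extract the first component from the two adjacency equations of $R$, using crucially the factorization $p_1+p_2+p_1p_2=(1+p_1)(1+p_2)-1$ hidden in $g$.

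First I would pass to a convenient normal form. By hypothesis $\lambda_2=\mathrm{id}$, so Lemma~\ref{3rdComp} gives $\pi_2=\mathrm{id}$ and shows that $\lambda_3$ and $\pi_3$ are each the identity up to an additive constant. Composing $\phi$ with a suitable translation $t_b$ affects neither the first nor the second component functions and turns $\lambda_3$ into the identity; then $(\ref{Eq25})$ forces $\pi_3=\mathrm{id}$ as well. Since it suffices to prove the claim for this modified automorphism (translations do not change first components), from now on I assume $\lambda_2=\pi_2=\mathrm{id}$ and $\lambda_3=\pi_3=\mathrm{id}$, i.e. $\phi[l_1,l_2,l_3]=[\lambda_1(l_1,l_2,l_3),l_2,l_3]$ and $\phi(p_1,p_2,p_3)=(\pi_1(p_1,p_2,p_3),p_2,p_3)$; moreover $\lambda_1(0,l_2,l_3)=0$ by Lemma~\ref{lampi23}.

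The heart of the argument is a local computation. Let $(p_1,p_2,p_3)\sim[l_1,l_2,l_3]$ be any edge of $R$ with $p_1l_1\neq 0$ and $p_2\notin\{0,-1\}$, and set $\alpha=\pi_1(p_1,p_2,p_3)$ and $\beta=\lambda_1(l_1,l_2,l_3)$. Applying $\phi$ and comparing with the defining relations of $R$, the image edge requires $\alpha\beta=p_1l_1$ (whence $\alpha,\beta\neq 0$) and $\alpha p_2\beta(\alpha+p_2+\alpha p_2)=p_1p_2l_1(p_1+p_2+p_1p_2)$; dividing the latter by $p_2$ and substituting $\alpha\beta=p_1l_1\neq 0$ gives $\alpha+p_2+\alpha p_2=p_1+p_2+p_1p_2$, that is $\alpha(1+p_2)=p_1(1+p_2)$, so $\alpha=p_1$ and then $\beta=l_1$. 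To drop the genericity, observe that every line $[l_1,l_2,l_3]$ with $l_1\neq 0$ has a neighbour $(p_1,\,p_1l_1-l_2,\,\ast)$ where $p_1$ can be chosen outside the at most three values $0$, $l_2/l_1$, $(l_2-1)/l_1$ (possible since $q\ge 7$); the local computation applied to this edge yields $\lambda_1(l_1,l_2,l_3)=l_1$. Together with $\lambda_1(0,l_2,l_3)=0$ this shows that $\lambda_1$ depends only on the first component and is the identity; in particular $\phi$ fixes every line $[1,c,d]$. Finally, for an arbitrary point $(p_1,p_2,p_3)$ its neighbour $[1,\,p_1-p_2,\,p_1p_2(p_1+p_2+p_1p_2)-p_3]$ is fixed by $\phi$, so $(\pi_1(p_1,p_2,p_3),p_2,p_3)$ is adjacent to it, and the second-component condition $p_2+(p_1-p_2)=\pi_1(p_1,p_2,p_3)$ forces $\pi_1(p_1,p_2,p_3)=p_1$. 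Reverting the normalization completes the proof.

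I expect the local computation to be the only genuinely delicate point: this is exactly where the specific polynomial $g=p_1p_2l_1(p_1+p_2+p_1p_2)$ is used, the identity $p_1+p_2+p_1p_2=(1+p_1)(1+p_2)-1$ being what makes the third-component equation collapse to $\alpha(1+p_2)=p_1(1+p_2)$ once the second-component equation has been fed in. Everything else --- checking that enough non-degenerate edges exist when $q\ge 7$, and reducing the cases $p_1=0$ or $p_2\in\{0,-1\}$ to the generic one through a single well-chosen neighbour --- is routine bookkeeping.
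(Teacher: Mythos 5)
Your proof is correct, and it follows the same overall strategy as the paper: normalize by a translation using Lemma~\ref{3rdComp} so that $\lambda_2=\pi_2=\lambda_3=\pi_3=\mathrm{id}$, then force the first components via the adjacency equations, with the decisive cancellation being the factor $(1+p_2)$ coming from $p_1+p_2+p_1p_2$. The one genuine difference is the local step: the paper anchors at the fixed point $(0,a,r)$ and compares the images of a $2$-path $(0,a,r)\sim[x,-a,-r]\sim(z,zx+a,\ast)$, extracting $zx=z'x'$ and then $z=z'$ for generic $z$, whereas you compare the two defining equations across a single generic edge $(p)[l]$ with $p_1l_1\neq 0$, $p_2\notin\{0,-1\}$, getting $\alpha\beta=p_1l_1$ and then $\alpha=p_1$, $\beta=l_1$ at once. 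Your single-edge version is somewhat more economical (it recovers $\pi_1$ and $\lambda_1$ simultaneously and needs only the trivial count that three values of $p_1$ are excluded), while the paper's $2$-path version avoids having to treat the degenerate point separately by always starting from a vertex already known to be fixed; both reduce to the same algebra. Your handling of the boundary cases ($\lambda_1(0,l_2,l_3)=0$ from Lemma~\ref{lampi23}, and the final determination of $\pi_1$ from the fixed neighbour $[1,p_1-p_2,\ast]$ using only the second-component equation) is sound.
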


 \begin{proof}
By Lemma \ref{3rdComp}, we may assume that there exists $b \in \F$ such that $\phi' = t_{-b}\phi$ has $\lambda'_2(a) = a = \pi_2'(a)$ for all $a \in \F$ and $\lambda'_3(r) = r = \pi_3'(r)$ for all $r \in \F$. We will demonstrate that as a result $\phi'$ is the identity automorphism, and therefore $\phi = t_b$.

Consider the following path for every $x, z \in \F^\times$:
$$
(0, a, r) \sim [x, -a, -r] \sim (z, zx+a, xz(zx+a)(z+zx+a+z(zx+a))+ r)
$$
As $\phi'(0, a, r) = (0, a, r)$, then applying $\phi'$ to this path yields:
$$
(0, a, r) \sim [x', -a, -r] \sim (z', z'x'+a, x'z'(z'x'+a)(z'+z'x'+a+z'(z'x'+a))+ r).
$$
Our goal is to prove that $x' = x$. This will imply that $\phi'$ fixes $[x, -a, -r]$ and therefore acts as the identity on the set of lines of $R$.

Since the second and the third components of every vertex are fixed by $\phi'$, we obtain
\begin{eqnarray*}\label{Eq8}
zx &=& z'x'\\
    x'z'(z'x'+a)(z'+z'x'+a+z'(z'x'+a)) &=& xz(zx+a)(z+zx+a+z(zx+a)).
\end{eqnarray*}
As $zx \neq 0$, then for $z \neq -a/x$, the above equation can be reduced to
\begin{equation}\label{Eq9}
 z'(1+zx+a) = z(1+zx+a).
\end{equation}
If $1 + zx + a \neq 0$, then (\ref{Eq8}) implies $z = z'$. Clearly, this inequality holds if $z \neq -(a+1)/x$. Thus $z = z'$ when $z \neq 0, -a/x, -(a+1)/x$. Therefore, for $q \geq 4$, there is at least one value of $z$ for which $z = z' \neq 0$. Since we know $zx = z'x'$, then $x = x'$. Therefore, $\phi'$ must fix $[x, -a, -r]$. Since the choice of $x$ was arbitrary, as was the choice of $a$ and $r$, then $\lambda_1(x, a, r) = x$, so that $\phi'$ acts as the identity on the lines of $R$. Since $\phi'$ fixes every line in $R$, clearly it must fix every point, so that $\phi'$ is the identity automorphism. Therefore, $\pi_1(x, a, r) = x$. Thus $\phi = t_b\phi' = t_b$ as claimed.
\end{proof}

The last sentence in the proof above implies the following corollary.

\begin{cor}
Let $\phi \in G$, and $\lambda_2(a) = a$ for all $a \in \F$, then $\phi = t_b$ for some $b \in \F$.
\end{cor}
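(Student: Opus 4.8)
The plan is short, because the statement is essentially already established: I would simply assemble it from the two results immediately preceding it. Let $\phi\in G$ with $\lambda_2(a)=a$ for all $a\in\F$. By Lemma~\ref{3rdComp}, $\pi_2(a)=a$ for all $a\in\F$, and there is a $b\in\F$ with $\lambda_3(r)=r-b$ and $\pi_3(r)=r+b$ for all $r\in\F$. By the preceding lemma, $\lambda_1(x)=x=\pi_1(x)$ for all $x\in\F$. Hence, for all $x,y,z\in\F$, $\phi[x,y,z]=[x,y,z-b]$ and $\phi(x,y,z)=(x,y,z+b)$; comparing this with the definition of the translation automorphisms recalled in Section~\ref{sec2}, this says precisely $\phi=t_b$.

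An even quicker route is to read the statement directly off the proof of the preceding lemma: there one shows that $\phi'=t_{-b}\phi$ fixes every line, and therefore every point, of $R$, i.e.\ $\phi'$ is the identity automorphism, so $\phi=t_b\phi'=t_b$. I would take this route in the actual write-up and keep the argument to a single sentence.

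I do not expect any obstacle here. The only thing being verified is that an automorphism of $R$ whose component functions on lines are $(\lambda_1,\lambda_2,\lambda_3)=(\mathrm{id},\mathrm{id},\ r\mapsto r-b)$ and on points are $(\pi_1,\pi_2,\pi_3)=(\mathrm{id},\mathrm{id},\ r\mapsto r+b)$ must coincide with $t_b$, and this is immediate from the definition of $t_b$. So the proof amounts to citing Lemma~\ref{3rdComp} together with the preceding lemma (or its proof) and invoking that definition.
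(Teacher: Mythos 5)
Your proposal is correct and matches the paper exactly: the paper itself derives this corollary by simply reading it off the last sentence of the proof of the preceding lemma, where $\phi'=t_{-b}\phi$ is shown to fix every line and hence every point, giving $\phi=t_b$. Your sign bookkeeping ($\lambda_3(r)=r-b$, $\pi_3(r)=r+b$) is also consistent with the definition of the translations $t_b$ in Section~\ref{sec2}.
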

%But that means that at least two neighbors of $[x, -a, -r]$ are fixed. No other lines in $R$ can share these two neighbors with $[x, -a, -r]$ for then we would have a 4-cycle, and therefore, $\phi'$ must fix $[x, -a, -r]$. Since the choice of $x$ was arbitrary, as was the choice of $a, r$, then $\lambda_1(x, a, r) = x$, so that $\phi'$ acts as the identity on the lines of $\Gamma$. Since $\phi'$ fixes every line in $R$, clearly it must fix every point, so that $\phi' = 1$ is the identity automorphism. Thus $\phi = t_b\phi' = t_b$ as claimed. $\;\;\;\;\;  \end{proof}$

When $q$ is prime, then $\F = \F_p$. By Lemma \ref{Fix2ndComp}, we have that $\lambda_2(a) = a$ for all $a \in \F$. Therefore, all the conditions of Corollary 4.11 are satisfied in this case, and the proof of Theorem 2.1 is complete. $\hspace{11cm} \Box$

%\begin{thm}
%Let $\phi \in \Aut(R)$, then $\phi(P) = P$ and $\phi(L) = L$.
%\end{thm}

% \begin{proof}
%Suppose not, then $\phi^2$ has the property that $\phi(P) = P$ and $\phi(L) = L$. According %to everything above, we have several cases.

%Suppose that $\phi^2 = t_a$ for some $a$. Let $(x, y, z)$ be a point

\section{Concluding remarks} \label{sec5}

%About algebraically defined graphs, especially with two equations.

We would like to  note that for odd prime powers $q=p^e$, each of the groups  $Aut(\Gamma_i)$, $i=2,3$,  contains a cyclic subgroup of order $e$ related to the Frobenius automorphism  of the field $\F$, namely  $\phi_p:  (p_1,p_2,p_3)\mapsto (p_1^p,p_2^p,p_3^p)$ and $[l_1,l_2,l_3]) \mapsto  [l_1^p,l_2^p,l_3^p]$. This implies that $Aut(\Gamma _i)$ contains a subgroup of order $eq^{4-i}$, that is a semidirect product of $(\F^{4-i}, +)$ and $\langle \phi_p \rangle$.  In fact, this subgroup  seems to be the whole $Aut(\Gamma _i)$,  and this was verified by computer for all odd prime powers $q$, $q\le 41$.
\begin{conj} (\cite{nassau2020})   For all odd prime powers $q$,  $|Aut(R)|=eq$.
\end{conj}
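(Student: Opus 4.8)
The plan is to deduce the conjecture from the already-available lower bound $|Aut(R)|\ge eq$ — given by the subgroup $T\rtimes\langle\phi_p\rangle$, where $T=\{t_b:b\in\F\}\cong(\F,+)$ is the translation subgroup and $\phi_p$ is the Frobenius, which normalises $T$, has order $e$, and meets $T$ trivially — together with a proof that this subgroup is everything. The decisive structural point is the remark already made in Section~\ref{sec4}: Sections~\ref{sec3} and~\ref{sec4} are written for arbitrary odd prime powers $q\equiv1\pmod3$, and $q=p$ is used \emph{only} at the very end, to apply Lemma~\ref{Fix2ndComp} over $\F=\F_p$. Consequently the whole conjecture reduces to one additional statement: for every $\phi\in G:=Aut(R)$, the second-component map $\lambda_2\colon\F\to\F$ is a field automorphism. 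Granting this, write $\lambda_2=\phi_p^{k}$; then $\phi_p^{-k}\phi$ has second-component map equal to the identity on all of $\F$, so Lemma~\ref{3rdComp}, the last lemma of Section~\ref{sec4}, and the final corollary of Section~\ref{sec4} apply and yield $\phi_p^{-k}\phi=t_b$ for some $b$, whence $\phi\in T\rtimes\langle\phi_p\rangle$ and $|Aut(R)|\le eq$.

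For an arbitrary odd prime power $q\equiv1\pmod3$ we already know, by Lemma~\ref{lampi23}, that $\lambda_2$ is a single-variable bijection with $\pi_2(x)=-\lambda_2(-x)$, and, by Lemma~\ref{Fix2ndComp}, that $\lambda_2$ fixes the prime subfield $\F_p\subseteq\F$ pointwise; in particular $\lambda_2(0)=0$ and $\lambda_2(1)=1$. Two things are still missing: (a) that the first-component maps $\pi_1$ (on points) and $\lambda_1$ (on lines) are single-variable functions for an arbitrary, not-yet-normalised $\phi$ — the penultimate lemma of Section~\ref{sec4} establishes $\lambda_1(x)=x=\pi_1(x)$ only \emph{after} $\lambda_2$ has been forced to be the identity — and (b) the field-automorphism property of $\lambda_2$ itself.

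Granting (a), part (b) should come out of the two defining congruences of $R$. Applying $\phi$ to an edge $(p)[\ell]$, the relation $p_2+\ell_2=p_1\ell_1$ forces $\pi_2(p_2)+\lambda_2(\ell_2)=\pi_1(p_1)\lambda_1(\ell_1)$ whenever $p_2+\ell_2=p_1\ell_1$. Setting $p_2=0$ and using $\lambda_2(0)=0$ gives $\lambda_2(p_1\ell_1)=\pi_1(p_1)\lambda_1(\ell_1)$ for all $p_1,\ell_1\in\F$; putting successively $\ell_1=1$, $p_1=1$, and $p_1=\ell_1=1$, and using $\lambda_2(1)=1$, one obtains $\pi_1(p_1)=\lambda_2(p_1)/\lambda_1(1)$, $\lambda_1(\ell_1)=\lambda_2(\ell_1)/\pi_1(1)$, and $\lambda_1(1)\pi_1(1)=1$, so that $\lambda_2(p_1\ell_1)=\lambda_2(p_1)\lambda_2(\ell_1)$ — multiplicativity. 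Feeding this back in, $\pi_2(p_2)+\lambda_2(c-p_2)=\lambda_2(c)$ for all $c,p_2\in\F$ (since $p_1\ell_1$ ranges over all of $\F$), and since $\pi_2(x)=-\lambda_2(-x)$ this rearranges — first to $\lambda_2(-b)=-\lambda_2(b)$, then to $\lambda_2(a-b)=\lambda_2(a)-\lambda_2(b)$ — to additivity. Hence $\lambda_2\in\mathrm{Gal}(\F/\F_p)=\langle\phi_p\rangle$. The second congruence $p_3+\ell_3=g(p_1,p_2,\ell_1)$ can be used in the same spirit, either as a consistency check forcing $\lambda_3$ to be the same field automorphism, or as an alternative source of additivity.

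I expect the main obstacle to be step (a): showing $\pi_1$ and $\lambda_1$ depend only on the first component of a vertex when $\phi$ is not yet known to fix any component. Over $\F_p$ this was obtained for free once $\lambda_2$ was the identity; over a proper extension $\F_{p^e}$ one seems to need a fresh neighborhood/distance analysis — for instance exploiting that the set of points with first component $0$ (which equals $\bigcup_aP_a$) and the set of lines with first component $0$ (which equals $\bigcup_aL_a$) are $\phi$-invariant, that $\phi$ acts on $\bigcup_aL_a$ through $\lambda_2$, and the distance dichotomy of Lemma~\ref{Dist6verts}, in order to bootstrap to vertices with nonzero first component. A secondary, clerical task is to re-confirm that every symbolic identity behind Theorem~\ref{0001AB} — the coefficient equations \eqref{Jsq}--\eqref{Jfour} and the applications of Propositions~\ref{Hermite} and~\ref{Wan} — depends only on the characteristic $p$ and not on $\F$ being prime; inspection suggests it does. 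Finally, composing with a translation $t_m$ to normalise $\lambda_3(0)=0$ (hence $\pi_3(0)=0$) is harmless, since $t_m$ fixes the first two components of every vertex.
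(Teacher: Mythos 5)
You should first note that the paper does not prove this statement at all: it is left as an open conjecture, so there is no ``paper proof'' to match, and your task is genuinely to supply something the authors did not. Your architecture is the natural one --- the subgroup generated by the translations $t_b$ and the Frobenius gives $|Aut(R)|\ge eq$, and the upper bound would follow from Lemma~\ref{3rdComp} and the final corollary of Section~\ref{sec4} once one knows that $\lambda_2$ is a field automorphism of $\F$ (so that composing with a power of $\phi_p$ makes it the identity). But as written the proposal has two genuine gaps, one of which you acknowledge and one of which you do not.

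The gap you do not address is the congruence restriction. The conjecture is asserted for \emph{all} odd prime powers $q$, but every tool you invoke --- Theorem~\ref{0001AB}, its corollary that automorphisms preserve the bipartition, Lemma~\ref{FelThm}, and hence Lemmas~\ref{GactsP0}--\ref{Fix2ndComp} --- is established in the paper only for $q\equiv 1\pmod 3$. The Remark after Lemma~\ref{lem} is explicit that for $q\equiv 0,2\pmod 3$ cubic permutation polynomials exist, the proof of Theorem~\ref{0001AB} ``becomes more subtle,'' and the authors do not carry it out; so for two of the three residue classes you have no starting point, not even the invariance of $L_0$ and $L_1$. Your closing ``clerical'' check that the symbolic identities depend only on $p$ does not touch this: the obstruction is the Hermite--Dickson/Wan step, which genuinely uses $3\mid q-1$. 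The second gap is your step (a), which you correctly flag as the main obstacle but leave unresolved, and it is not a formality: the identity $\lambda_2(p_1\ell_1)=\pi_1(p)\lambda_1(l)$ only yields multiplicativity if $\pi_1$ and $\lambda_1$ are functions of the first component alone, and the paper's only route to that fact (the penultimate lemma of Section~\ref{sec4}) \emph{assumes} $\lambda_2=\mathrm{id}$, which is exactly what you are trying to establish --- so invoking it here would be circular. Varying $p_3$ for fixed $p_1,\ell_1$ only shows the product $\pi_1(p)\lambda_1(l)$ is constant along certain edge families, which is weaker than single-variable dependence. Until (a) is proved by an independent neighborhood or distance argument (and the mod~$3$ restriction is removed or the remaining cases handled separately), the proposal is a plausible program rather than a proof.
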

We would like to conclude this paper with the following problem that is analogous to Question 1 in the case where $\F$ is infinite. Clearly,   methods used in this paper will not work in this case.  For example, if $\F=\R$ -- the field of real numbers, additive groups $(\R, +)$ and $(\R^2, +)$ are isomorphic, which  can be shown by using a Hamel basis of  the vector space $\R$  over the field of rational numbers $\Q$.
\begin{pro} Let $\F$ be an infinite field. Is there a graph $\Gamma_3 = \Gamma_\F (f(p_1,l_1), g(p_1,p_2,l_1))$ that is not isomorphic to  a graph $\Gamma_2 = \Gamma_\F (f(p_1,l_1), h(p_1,l_1))$, where $f$, $g$ and $h$ are polynomial functions?
Does the answer to this question change if we allow the  functions $f$, $g$ and $h$ to be arbitrary functions on $\F$?
\end{pro}

Let $q = p^e$ be an odd prime power, $q \geq 5$. The graph $\Gamma = \Gamma_{\F_q}(p_1\ell_1)$ is sometimes referred to as the bi-affine part of the point-line incidence graph of the classical projective plane of order $q$. The graph $\Gamma_3 = \Gamma_{\F_q}(p_1\ell_1, g(p_1, \ell_1, p_2))$ is a $q$-lift of $\Gamma_{\F_q}(p_1\ell_1)$. Sometimes $|Aut(\Gamma_3)|$ is larger than $|Aut(\Gamma)| = 2eq^3(q-1)^2$, see \cite{Vig2002}. For example, when $g = p_1\ell_2$, $|Aut(\Gamma_3)| = eq^4(q-1)^2$, see \cite{Vig2002}. In this paper we showed that when $q = p$ and $p \equiv 1 \pmod{3}$, $|Aut(R)| = p$, demonstrating that at other times, a $q$-lift of $\Gamma$ can have an automorphism group with much smaller order. This motivates the following problem.

\begin{pro}
Let $\F_q$ be the finite field of $q$ elements. Describe all possible groups $Aut(\Gamma_{\F_q}(p_1\ell_1, g(p_1, \ell_1, p_2)))$.

\end{pro}

\section{Acknowledgement}  This work was partially supported by the Simons Foundation Award ID: 426092 and the National Science Foundation Grant: 1855723.

\newpage

\section{Appendix}

\centerline{\large{{\bf  Maple Code for finding the 3-neighborhood of a vertex of $R$}}}
\bigskip

{\small
\noindent\verb!#This program builds a path from a vertex of graph R using the first!

\noindent\verb!#components of the vertices in the path.!

\medskip

\noindent\verb!restart;!

\medskip

\noindent\verb!#Given a line, this procedure finds the neighbor of the line with given first component.!

\medskip

\noindent\verb!nl := proc (line::list, b::algebraic)!
\verb!local nbl, f2, f3, p, l, l1, l2, l3, p1, p2, p3;! \newline
\verb!l1 := line[1]; l2 := line[2]; l3 := line[3];! \newline
\verb!f2 := (x1, y1)! $\rightarrow$  \verb!x1*y1;! \newline
\verb!f3 := (x1, x2, y1, y2)! $\rightarrow$ \verb!x1*x2*l1*(x1+x2+x1*x2);! \newline
\noindent\verb!nbl := solve({p2+l2 = f2(p1, l1), p3+l3 = f3(p1, p2, l1, l2), p1 = b}, {p1, p2, p3});! \newline
\noindent\verb!p := [subs(nbl, p1), subs(nbl, p2), subs(nbl, p3)];! \newline
\noindent\verb!end:! \\

\noindent\verb!#Given a point, this procedure finds the neighbor of the point with given first component.!\\

\noindent\verb!np := proc (point::list, a::algebraic)!
\verb!local nbp, f2, f3, p, l, l1, l2, l3, p1, p2, p3;! \newline
\verb!p1 := point[1]; p2 := point[2]; p3 := point[3];! \newline
\verb!f2 := (x1, y1)! $\rightarrow$  \verb!x1*y1;! \newline
\verb!f3 := proc (x1, x2, y1, y2)! $\rightarrow$ \verb!x1*x2*l1*(x1+x2+x1*x2);!\newline
\noindent\verb!nbp := solve({p2+l2 = f2(p1, l1), p3+l3 = f3(p1, p2, l1, l2), l1 = a}, {l1, l2, l3});! \newline
\noindent\verb!l := [subs(nbp, l1), subs(nbp, l2), subs(nbp, l3)];! \newline
\verb!end:!

\medskip

\noindent\verb!#A 3-Path the begins at line [A, B, 0]! \newline
\verb!#!

\medskip

\noindent\verb!L1 := [A, B, 0];! \newline
\verb!P1 := nl(L1, a);! \newline
\verb!L2 := factor(np(P1, x));!  \newline
\verb!P2 := factor(nl(L2, b));! \newline
\verb!print();! \\

\noindent\verb!#A 3-Path the begins at point (A, B, 0)! \newline
\verb!#!

\medskip

\noindent\verb!#P1 := [A, B, 0];! \newline
\verb!#L1 := np(P1, x);! \newline
\verb!#P2 := factor(nl(L1, a));!  \newline
\verb!#L2 := factor(np(P2, y));! \newline
\verb!#print();!

\medskip

\noindent\verb!#PAB below stands for P_{A, B}(b, c; a)! \newline
\verb!#!

\medskip

\noindent\verb!PAB := simplify(subs({x = (-A*a+B+c)/(b-a)}, P2));! \\

\noindent\verb!#3-neighborhoods of lines [0, 0, 0] and [0, 1, 0]! \newline
\verb!#!

\medskip

\verb!subs({A=0,B=0},PAB);!  \newline
\verb!subs({A = 0, B = 1}, PAB);! \newline
}

\newpage


\begin{thebibliography}{80}
\bibitem{bol98} B. Bollob\'{a}s. Modern graph theory, volume 184, Graduate Texts in Mathematics. Springer-Verlag, New York, 1998.


\bibitem{dickson1897} L.E. Dickson, The analytic representations of substitutions on a power of prime number of letters with a discussion of the linear group, {\it Ann. of Math.} 11, (1897) pp. 65 -- 120.


\bibitem{hermite1863} C. Hermite, Sur les fonctions de sept lettres, {\it C. R. Acad. Sci. Paris} 57, (1863); Oeuvres, Vol. 2, pp. 280 -- 288.

\bibitem{kodkro2021} A. Kodess, B. Kronenthal, D. Manzano-Ruiz and E. Noe, Classification by girth of three-dimensional algebraically defined monomial graphs over the real numbers, {\it Discrete Mathematics}, Volume 344, Issue 4, 2021.

\bibitem{lazwol2001} F. Lazebnik and A.J. Woldar, General properties of some families of graphs defined by
systems of equations, {\it J. Graph Theory}, 38 (2001), pp. 65 –- 86.

\bibitem{lazsunwang2017} F. Lazebnik, S. Sun, and Y.Wang, Some families of graphs, hypergraphs
and digraphs defined by systems of equations: a survey.  In Selected topics in
graph theory and its applications, volume 14 of Lect. Notes Semin. Interdiscip.
Mat., pp. 105 -- 142. Semin. Interdiscip. Mat. (S.I.M.), Potenza, 2017.

\bibitem{LN97} R. Lidl and H. Niederreiter, Finite Fields, volume 20 of Encyclopedia of Mathematics
and its Applications, Cambridge University Press, Cambridge, second edition, 1997.

\bibitem{handbook} G. Mullen and D. Panario, Handbook of Finite Fields, Chapman and Hall/CRC; 1st edition, 2013.

\bibitem{nassau2020} B. Nassau, Studies of Some Classes of Algebraically Defined Graphs, Ph. D. Thesis, University of Delaware, 2020.


\bibitem{Vig2002} R. Viglione, Properties of Some Algerbaically Defined Graphs, Ph. D. Thesis, University of Delaware, 2002.

\bibitem{Wan} D.Q. Wan, A $p$-adic lifting lemma and its applications to permutation polynomials, In Finite Fields, Coding Theory, and Advances in Communications and Computing, volume 141 of Lecture Notes in Pure and Appl. Math., 209 -- 216, Dekker, New York, 1993.

\bibitem{xu2021} M. Xu, X. Cheng and Y. Tang,   On the characterization of some algebraically defined bipartite graphs of girth eight.  {\tt arXiv:1912.04592v2}.
\end{thebibliography}
\end{document}